\begin{document}
\allowdisplaybreaks

\newcommand{\arXivNumber}{2002.03439}

\renewcommand{\thefootnote}{}

\renewcommand{\PaperNumber}{073}

\FirstPageHeading

\ShortArticleName{Nonstandard Quantum Complex Projective Line}

\ArticleName{Nonstandard Quantum Complex Projective Line\footnote{This paper is a~contribution to the Special Issue on Noncommutative Manifolds and their Symmetries in honour of Giovanni Landi. The full collection is available at \href{https://www.emis.de/journals/SIGMA/Landi.html}{https://www.emis.de/journals/SIGMA/Landi.html}}}

\Author{Nicola CICCOLI~$^\dag$ and Albert Jeu-Liang SHEU~$^\ddag$}

\AuthorNameForHeading{N.~Ciccoli and A.J.-L.~Sheu}

\Address{$^\dag$~Dipartimento di Matematica e Informatica, University of Perugia, Italy}
\EmailD{\href{mailto:nicola.ciccoli@unipg.it}{nicola.ciccoli@unipg.it}}

\Address{$^\ddag$~Department of Mathematics, University of Kansas, Lawrence, KS 66045, USA}
\EmailD{\href{mailto:asheu@ku.edu}{asheu@ku.edu}}

\ArticleDates{Received March 06, 2020, in final form July 24, 2020; Published online August 03, 2020}

\Abstract{In our attempt to explore how the quantum nonstandard complex projective spaces $\mathbb{C}P_{q,c}^{n}$ studied by Korogodsky, Vaksman, Dijkhuizen, and Noumi are related to those arising from the geometrically constructed Bohr--Sommerfeld groupoids by Bonechi, Ciccoli, Qiu, Staffolani, and Tarlini, we were led to establish the known identification of $C\big(\mathbb{C}P_{q,c}^{1}\big) $ with the pull-back of two copies of the Toeplitz $C^*$-algebra along the symbol map in a~more direct way via an operator theoretic analysis, which also provides some interesting non-obvious details, such as a~prominent generator of $C\big( \mathbb{C}P_{q,c}^{1}\big) $ being a~concrete weighted double shift.}

\Keywords{quantum homogeneous space; Toeplitz algebra; weighted shift}

\Classification{58B32; 46L85}

\renewcommand{\thefootnote}{\arabic{footnote}}
\setcounter{footnote}{0}

\section{Introduction}

In \cite{Sh:qcp}, the $C^*$-algebra $C\big( \mathbb{C}P_{q,c}^{n}\big) $ of
nonstandard quantum complex projective spaces studied by Korogodsky and
Vaksman \cite{KoVa} and Dijkhuizen and Noumi~\cite{DiNo} is embedded in a
concrete groupoid $C^*$-algebra, and then shown to have $C\big( \mathbb{S}_{q}^{2n-1}\big) $ as a quotient algebra, which reflects the geometric observation~\cite{Sh:cp} that the nonstandard ${\rm SU}( n+1)
$-covariant Poisson complex projective space $\mathbb{C}P^{n}$ contains a copy
of the standard Poisson sphere $\mathbb{S}^{2n-1}$.

Although the work in \cite{Sh:qcp} involves realizing $C\big( \mathbb{C}P_{q,c}^{n}\big) $ as part of a concrete groupoid $C^*$-algebra in order to analyze the algebra structure and extract useful information, it is not clear
whether one can actually realize $C\big( \mathbb{C}P_{q,c}^{n}\big) $ as
a groupoid $C^*$-algebra itself. However from a~purely differential geometric
consideration, an elegant program of constructing some quantum homogeneous
spaces as the groupoid $C^*$-algebras of geometrically constructed
Bohr--Sommerfeld groupoids is later successfully developed by Bonechi, Ciccoli,
Qiu, Staffolani, Tarlini~\cite{BCQT, BCST}. Naturally, it is of great interest
to decide whether the quantum complex projective space arising from this new
program is indeed the same as the known version of $C\big( \mathbb{C}P_{q,c}^{n}\big) $. Indeed they are the same for the case of $n=1$ because the underlying groupoids are shown to be isomorphic in Proposition~7.2 of~\cite{BCQT}. But the higher dimensional cases are far from being settled.

It is hoped that by analyzing more carefully the embedding of $C\big(\mathbb{C}P_{q,c}^{n}\big) $ in a concrete groupoid $C^*$-algebra found in~\cite{Sh:qcp} via a representation theoretic approach, one can see some direct
connection with the geometrically constructed Bohr--Sommerfeld groupoid and
then possibly find a way to identify these two different versions of quantum
complex projective spaces. While attempting this approach, we come to
recognize the need of a more direct understanding of the algebra structure of
$C\big( \mathbb{C}P_{q,c}^{n}\big) $ based on some known representations
of the ambient algebra $C( {\rm SU}_{q}( n+1)) $.

In particular, for $n=1$, we want to directly derive the algebra structure of
$C\big( \mathbb{C}P_{q,c}^{1}\big) $ from the basic representations of
$C( {\rm SU}_{q}(2)) $, instead of via identifying
$C\big( \mathbb{C}P_{q,c}^{1}\big) $ with the algebra $C\big(\mathbb{S}_{\mu c}^{2}\big) $ of the Podle\'{s} quantum 2-sphere \cite{Po}
as indicated in~\cite{DiNo, KoVa}. In this note, we show how to accomplish it.
Along the way, our detailed analysis reveals some nontrivial hidden
structures, for example, a distinguished generator $x_{1}^{\ast}x_{2}$ of
$C\big( \mathbb{C}P_{q,c}^{1}\big) $ is a weighted double shift (on a~core Hilbert space that determines the $C^*$-algebra structure of $C\big(\mathbb{C}P_{q,c}^{1}\big) $) with respect to an orthonormal basis, and its weights are determined by a concrete formula.

\section[Nonstandard quantum $\mathbb{C}P_{q,c}^{1}$]{Nonstandard quantum $\boldsymbol{\mathbb{C}P_{q,c}^{1}}$}

We recall the description of $C\big( \mathbb{C}P_{q,c}^{n}\big) $ with
$c\in ( 0,\infty ) $ and $q\in ( 1,\infty ) $ obtained
by Dijkhuizen and Noumi~\cite{DiNo} as
\[
C\big(\mathbb{C}P_{q,c}^{n}\big)\cong C^{\ast}(\{x_{i}^{\ast}x_{j}\,|\,1\leq i,j\leq
n+1\})\subset C ( {\rm SU}_{q}(n+1) ),
\]
where
\[
x_{i}=\sqrt{c}u_{1,i}+u_{n+1,i}
\]
for the standard generators $\{ u_{i,j}\} _{i,j=1}^{n+1}$ of $C( {\rm SU}_{q}(n+1))$.

In this paper, we focus on the case of $n=1$, with the goal to directly show
that $C\big( \mathbb{C}P_{q,c}^{1}\big) $ is the pullback $\mathcal{T}\oplus_{C( \mathbb{T}) }\mathcal{T}$ of two copies of the
standard symbol map
\[
\sigma\colon \ \mathcal{T}\rightarrow C ( \mathbb{T} )
\]
for the Toeplitz algebra $\mathcal{T}$ that is the $C^*$-algebra generated by the
(forward) unilateral shift $\mathcal{S}$ on $\ell^{2}( \mathbb{Z}_{\geq
}) $, with $\ker( \sigma) =\mathcal{K}\big( \ell
^{2} ( \mathbb{Z}_{\geq} )\big) $ the ideal of all compact operators.

For $C ( {\rm SU}_{q}(2) ) $, consider the known faithful
representation $\pi$ of $C ( {\rm SU}_{q}(2) ) $
determined by
\[
\pi(u) \equiv\left(
\begin{matrix}
\pi ( u_{11} ) & \pi ( u_{12} ) \\
\pi ( u_{21} ) & \pi ( u_{22} )
\end{matrix}
\right) :=\left\{ \left(
\begin{matrix}
t_{1}\alpha & -q^{-1}t_{1}\gamma\\
t_{2}\gamma & t_{2}\alpha^{\ast}%
\end{matrix}
\right) \right\} _{t_{2}=\overline{t_{1}}\in\mathbb{T}}%
\]
as a $\mathbb{T}$-family of representations of $C( {\rm SU}_{q}(2) ) $ on $\ell^{2} ( \mathbb{Z}_{\geq}) $ with
parameter $t_{1}\equiv\overline{t_{2}}\in\mathbb{T}$, where
\[
\alpha=\left(
\begin{matrix}
0 & \sqrt{1-q^{-2}} & 0 & & \\
0 & 0 & \sqrt{1-q^{-4}} & 0 & \\
0 & 0 & 0 & \sqrt{1-q^{-6}} & \ddots\\
& 0 & 0 & 0 & \ddots\\
& & \ddots & \ddots & \ddots
\end{matrix}
\right) \in\mathcal{B}\big( \ell^{2}( \mathbb{Z}_{\geq})\big)
\]
and
\[
\gamma=\left(
\begin{matrix}
1 & 0 & 0 & & \\
0 & q^{-1} & 0 & 0 & \\
0 & 0 & q^{-2} & 0 & \ddots\\
& 0 & 0 & q^{-3} & \ddots\\
& & \ddots & \ddots & \ddots
\end{matrix}
\right) \in\mathcal{B}\big( \ell^{2} ( \mathbb{Z}_{\geq} )
\big) \ \text{\ \ self-adjoint}
\]
satisfying
\[
\alpha^{\ast}\alpha+\gamma\gamma^{\ast}\equiv\alpha^{\ast}\alpha+\gamma
^{2}=I=\alpha\alpha^{\ast}+q^{-2}\gamma^{2}\equiv\alpha\alpha^{\ast}%
+q^{-2}\gamma^{\ast}\gamma
\]
and
\[
\gamma\alpha^{\ast}-q^{-1}\alpha^{\ast}\gamma=\alpha\gamma^{\ast}-q^{-1}%
\gamma^{\ast}\alpha=0\equiv\alpha\gamma-q^{-1}\gamma\alpha=\gamma^{\ast}%
\alpha^{\ast}-q^{-1}\alpha^{\ast}\gamma^{\ast}%
\]
which ensure the required condition $\pi(u) \pi(u)
^{\ast}=I=\pi(u) ^{\ast}\pi(u) $.

In this paper, we identify every element of $C( {\rm SU}_{q}(2)
) \supset C ( \mathbb{C}P_{q,c}^{1} ) $ with a $\mathbb{T}$-family of operators on $\ell^{2} ( \mathbb{Z}_{\geq} ) $ via this
faithful representation $\pi$, and we analyze such a $\mathbb{T}$-family of
operators pointwise at each fixed $t_{1}\in\mathbb{T}$.

More explicitly, the generators $x_{1}^{\ast}x_{2},x_{1}^{\ast}x_{1},x_{2}^{\ast}x_{2}$ of $C\big( \mathbb{C}P_{q,c}^{1}\big) $ are
$\mathbb{T}$-families of operators with
\[
x_{1}:=\sqrt{c}t_{1}\alpha+t_{2}\gamma=\left(
\begin{matrix}
t_{2} & \sqrt{c}t_{1}\sqrt{1-q^{-2}} & 0 & & \\
0 & t_{2}q^{-1} & \sqrt{c}t_{1}\sqrt{1-q^{-4}} & 0 & \\
0 & 0 & t_{2}q^{-2} & \sqrt{c}t_{1}\sqrt{1-q^{-6}} & \ddots\\
& 0 & 0 & t_{2}q^{-3} & \ddots\\
& & \ddots & \ddots & \ddots
\end{matrix}
\right)
\]
and
\[
x_{2}:=-q^{-1}\sqrt{c}t_{1}\gamma+t_{2}\alpha^{\ast}=\left(
\begin{matrix}
-q^{-1}\sqrt{c}t_{1} & 0 & 0 & & \\
t_{2}\sqrt{1-q^{-2}} & -q^{-2}\sqrt{c}t_{1} & 0 & 0 & \\
0 & t_{2}\sqrt{1-q^{-4}} & -q^{-3}\sqrt{c}t_{1} & 0 & \ddots\\
& 0 & t_{2}\sqrt{1-q^{-6}} & -q^{-4}\sqrt{c}t_{1} & \ddots\\
& & \ddots & \ddots & \ddots
\end{matrix}
\right) .
\]

At any fixed $t_{1}\in\mathbb{T}$, it is easy to see that $\ker (x_{2}) =0$ since
\begin{gather*}
0=x_{2}\left( \sum_{n=0}^{\infty}z_{n}e_{n}\right) =-q^{-1}\sqrt{c}
z_{0}e_{0}+\big( \sqrt{1-q^{-2}}z_{0}-q^{-2}\sqrt{c}z_{1}\big)
e_{1}\\
\hphantom{0=x_{2}\left( \sum_{n=0}^{\infty}z_{n}e_{n}\right) =}{} +\big( \sqrt{1-q^{-4}}z_{1}-q^{-3}\sqrt{c}z_{2}\big) e_{2}+\cdots
\end{gather*}
implies $z_{0}=z_{1}=z_{2}=\cdots=0$, and $\dim( \operatorname{coker}(
x_{2}) ) =1$ since $x_{2}\equiv t_{2}\alpha^{\ast}\equiv
t_{2}\mathcal{S}$ modulo $\mathcal{K}$ is a Fredholm operator of index $-1$.
On the other hand, $\dim( \ker( x_{1}) ) =1$ by
solving $0=x_{1}\Big( \sum\limits_{n=0}^{\infty}z_{n}e_{n}\Big) $ to get that if
$z_{0}=1$, then
\[
z_{n}=( -1) ^{n}t_{2}^{n}q^{\frac{-n ( n-1 ) }{2}%
}\sqrt{c}^{-n}t_{1}^{-n}\sqrt{1-q^{-2}}^{-1}\cdots\sqrt{1-q^{-2n}}^{-1}%
\]
for all $n\in\mathbb{N}$, and $x_{1}$ is surjective since $x_{1}\equiv\sqrt
{c}t_{1}\alpha\equiv\sqrt{c}t_{1}\mathcal{S}^{\ast}$ modulo $\mathcal{K}$ is a
Fredholm operator of index $1$. So $x_{1}^{\ast}x_{2}$ is a Fredholm operator
of index $-2$. Actually, $\ker ( x_{1}^{\ast}x_{2} ) =0$ (hence
$ ( x_{1}^{\ast}x_{2} ) ^{\ast} ( x_{1}^{\ast}x_{2}) $
is invertible) and $\dim ( \operatorname{coker} ( x_{1}^{\ast}%
x_{2} ) ) =2$, and hence the partial isometry $(
x_{1}^{\ast}x_{2}) \vert x_{1}^{\ast}x_{2}\vert ^{-1}$ in
the polar decomposition of $x_{1}^{\ast}x_{2}$ is $\mathcal{S}\oplus
\mathcal{S}$ (up to a unitary direct summand) after a suitable choice of
orthonormal basis. This observation is consistent with our goal to show that
$C\big( \mathbb{C}P_{q,c}^{1}\big) $ is isomorphic to the pullback
$C^*$-algebra $\mathcal{T}\oplus_{C ( \mathbb{T} ) }\mathcal{T}$, but
is far from sufficient to make such a conclusion. We need to do a much more
detailed analysis which starts with the following computation.

First we compute
\begin{gather*}
x_{1}^{\ast}x_{1} =c\alpha^{\ast}\alpha+\sqrt{c}\overline{t_{1}}^{2}%
\alpha^{\ast}\gamma+\sqrt{c}t_{1}^{2}\gamma\alpha+\gamma^{2}\\
\hphantom{x_{1}^{\ast}x_{1}}{} =c+(1-c) \gamma^{2}+\sqrt{c}\overline{t_{1}}^{2}\alpha^{\ast
}\gamma+\sqrt{c}t_{1}^{2}\gamma\alpha\equiv c\quad \text{mod\ }\mathcal{K},
\\
x_{2}^{\ast}x_{2} =\alpha\alpha^{\ast}-q^{-1}\sqrt{c}t_{1}^{2}\alpha
\gamma-q^{-1}\sqrt{c}\overline{t_{1}}^{2}\gamma\alpha^{\ast}+q^{-2}c\gamma
^{2}\\
\hphantom{x_{2}^{\ast}x_{2}}{} =1+q^{-2}(c-1) \gamma^{2}-q^{-1}\sqrt{c}t_{1}^{2}\alpha
\gamma-q^{-1}\sqrt{c}\overline{t_{1}}^{2}\gamma\alpha^{\ast}\\
\hphantom{x_{2}^{\ast}x_{2}}{} =1+q^{-2}(c-1) \gamma^{2}-q^{-2}\sqrt{c}t_{1}^{2}\gamma
\alpha-q^{-2}\sqrt{c}\overline{t_{1}}^{2}\alpha^{\ast}\gamma\equiv
1\quad \text{mod\ }\mathcal{K},
\\
x_{1}^{\ast}x_{2}=\sqrt{c}\overline{t_{1}}^{2} ( \alpha^{\ast} )
^{2}-cq^{-1}\alpha^{\ast}\gamma+\gamma\alpha^{\ast}-q^{-1}\sqrt{c}t_{1}%
^{2}\gamma^{2}\equiv\sqrt{c}\overline{t_{1}}^{2}\mathcal{S}^{2} \quad \text{mod\ }
\mathcal{K},
\\
x_{2}^{\ast}x_{1}=\sqrt{c}t_{1}^{2}\alpha^{2}-cq^{-1}\gamma\alpha+\alpha
\gamma-q^{-1}\sqrt{c}\overline{t_{1}}^{2}\gamma^{2},
\end{gather*}
which imply
\[
x_{1}^{\ast}x_{1}+q^{2}x_{2}^{\ast}x_{2}=q^{2}+c.
\]
So the $C^*$-algebra $C\big( \mathbb{C}P_{q,c}^{1}\big) $ is generated by
$x_{1}^{\ast}x_{2}$ and $x_{1}^{\ast}x_{1}$, i.e.,
\[
C \big( \mathbb{C}P_{q,c}^{1} \big) =C^{\ast} ( \{ x_{1}^{\ast
}x_{2}, x_{1}^{\ast}x_{1} \} ) ,
\]
since $x_{2}^{\ast}x_{1}= ( x_{1}^{\ast}x_{2} ) ^{\ast}$ and
$x_{2}^{\ast}x_{2}=1+cq^{-2}-q^{-2}x_{1}^{\ast}x_{1}$ are generated by
$x_{1}^{\ast}x_{2}$ (with $ ( x_{1}^{\ast}x_{2} ) ^{\ast} (
x_{1}^{\ast}x_{2} ) $ invertible) and $x_{1}^{\ast}x_{1}$. As a remark,
we note that $x_{1}^{\ast}x_{1}$ and $x_{2}^{\ast}x_{2}$ commute.

We also note that
\[
x_{1}x_{1}^{\ast}+x_{2}x_{2}^{\ast}=1+c
\]
and hence $x_{1}x_{1}^{\ast}$ and $x_{2}x_{2}^{\ast}$ commute. Indeed
\begin{align*}
x_{1}x_{1}^{\ast} & =\big( \sqrt{c}t_{1}\alpha+t_{2}\gamma\big) \big(
\sqrt{c}t_{2}\alpha^{\ast}+t_{1}\gamma\big) =c\alpha\alpha^{\ast}+\sqrt
{c}t_{1}^{2}\alpha\gamma+\sqrt{c}t_{2}^{2}\gamma\alpha^{\ast}+\gamma^{2}\\
& =c-cq^{-2}\gamma^{2}+\sqrt{c}t_{1}^{2}q^{-1}\gamma\alpha+\sqrt{c}t_{2}
^{2}q^{-1}\alpha^{\ast}\gamma+\gamma^{2},
\end{align*}
while
\begin{align*}
x_{2}x_{2}^{\ast} & =\big( t_{2}\alpha^{\ast}-q^{-1}\sqrt{c}t_{1}
\gamma\big) \big( t_{1}\alpha-q^{-1}\sqrt{c}t_{2}\gamma\big)
=\alpha^{\ast}\alpha-q^{-1}\sqrt{c}t_{2}^{2}\alpha^{\ast}\gamma-q^{-1}\sqrt
{c}t_{1}^{2}\gamma\alpha+q^{-2}c\gamma^{2}\\
& =1-\gamma^{2}-q^{-1}\sqrt{c}t_{2}^{2}\alpha^{\ast}\gamma-q^{-1}\sqrt
{c}t_{1}^{2}\gamma\alpha+q^{-2}c\gamma^{2}.
\end{align*}

Before proceeding further, we recall some operator-theoretic properties often
used implicitly in the following analysis, including that $\overline
{\operatorname{range}(T) }=\ker ( T^{\ast} )
^{\perp}$ and $\ker (T^{\ast} ) =\ker ( T^{\ast}T ) $
for general bounded linear operators $T$ on a Hilbert space $\mathcal{H}$
easily derived from $ \langle T^{\ast}(v) ,w \rangle
= \langle v,T(w) \rangle $ and $ \langle (
T^{\ast}T ) (v) ,v \rangle = \langle T (
v ) ,T(v) \rangle $ for all $v,w\in\mathcal{H}$
respectively. In $C^*$-algebra theory, a projection refers to a self-adjoint
idempotent. For an operator $T$ in the $C^*$-algebra $\mathcal{B} (
\mathcal{H} ) $ of all bounded linear operators on $\mathcal{H}$, we
recall that $T$ is a projection if and only if $T$ is geometrically the
orthogonal projection from $\mathcal{H}$ onto a closed subspace of~$\mathcal{H}$.

We will need some basic knowledge of Fredholm operators, i.e., operators
$T\in\mathcal{B} ( \mathcal{H} ) $ with its quotient class $ [
T ] $ an invertible element of the Calkin algebra $\mathcal{B} (
\mathcal{H} ) /\mathcal{K} ( \mathcal{H} ) $, which can be
found in \cite{Do,Mu}. Any such operator has
closed finite-codimensional range and finite-dimensional kernel, and the
intersection of $\mathbb{R}\backslash \{ 0 \} $ and the spectrum
$\operatorname{Sp}(T) $ of any positive Fredholm operator $T$
is a compact subset of $ ( 0,\infty ) $. Also we note that the set
of all Fredholm operators is closed under taking adjoint and composition of operators.
For any positive Fredholm operator~$T$ we will denote by $\underline{T^{-1/2}}$
the positive
operator $f(T) $ defined by functional calculus, where $f$
is the nonnegative continuous function on $ \{ 0 \} \sqcup K$
such that $f ( \bullet ) =\bullet^{-1/2}$ on $K:=\operatorname{Sp}(T) \backslash \{ 0 \} $ and $f (
0 ) =0$.

Below we recall a folklore result with a proof.

\begin{lemma}\label{Fredholm} For any Fredholm operator $T$ on a Hilbert space~$\mathcal{H}$,
\[
\tilde{T}:=T\underline{( T^{\ast}T) ^{-1/2}}
\]
is a partial isometry sending the closed subspace $( \ker(T)) ^{\perp}\equiv\operatorname{range}( T^{\ast})
$ isometrically onto the closed subspace $\operatorname{range}(T) $ while annihilating $\ker(T) $.
\end{lemma}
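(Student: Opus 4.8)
The plan is to verify that $\tilde{T}$ is a partial isometry by exhibiting $\tilde{T}^{\ast}\tilde{T}$ as an orthogonal projection, and then to identify its initial and final spaces explicitly. Throughout I abbreviate $A:=T^{\ast}T$ and $B:=\underline{(T^{\ast}T)^{-1/2}}=f(A)$, so that $\tilde{T}=TB$ and $B=B^{\ast}$ because $f$ is real-valued. First I would record the consequences of Fredholmness that make the functional calculus well-behaved: since $A$ is a positive Fredholm operator, $K:=\operatorname{Sp}(A)\setminus\{0\}$ is a compact subset of $(0,\infty)$, so $0$ is an isolated point of $\operatorname{Sp}(A)$ and both $f$ and $1/f$ are bounded on $K$ (being continuous on a compact set inside $(0,\infty)$). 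In particular $B$ is bounded and self-adjoint, and $\ker B=\ker A=\ker(T^{\ast}T)=\ker(T)$ by the identity recalled just before the statement.

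The key computation is $\tilde{T}^{\ast}\tilde{T}=B^{\ast}T^{\ast}TB=BAB=g(A)$, where $g(x)=f(x)\,x\,f(x)$ equals $1$ on $K$ and $0$ at $x=0$. Because $0$ is isolated in $\operatorname{Sp}(A)$, this $g$ is continuous on the spectrum, and $g(A)$ is exactly the spectral projection of $A$ away from $0$, i.e.\ the orthogonal projection onto $(\ker A)^{\perp}=(\ker T)^{\perp}$. Hence $\tilde{T}^{\ast}\tilde{T}$ is a projection, so $\tilde{T}$ is a partial isometry whose initial space is $\operatorname{range}(\tilde{T}^{\ast}\tilde{T})=(\ker T)^{\perp}$; from $\|\tilde{T}v\|^{2}=\langle\tilde{T}^{\ast}\tilde{T}v,v\rangle$ it acts isometrically on $(\ker T)^{\perp}$ and annihilates $\ker T$.

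It then remains to pin down the final space as $\operatorname{range}(T)$. I would first show $\operatorname{range}(B)=(\ker T)^{\perp}$: since $1/f$ is bounded on $K$, the restriction of $B=f(A)$ to $(\ker A)^{\perp}$ is bounded with bounded inverse (given by the functional calculus of $1/f$), hence surjective onto $(\ker A)^{\perp}=(\ker T)^{\perp}$. Applying $T$ gives $\operatorname{range}(\tilde{T})=T(\operatorname{range}(B))=T\big((\ker T)^{\perp}\big)=\operatorname{range}(T)$, where the last equality uses the decomposition $\mathcal{H}=\ker T\oplus(\ker T)^{\perp}$ together with $T(\ker T)=0$, and this range is closed because $T$ is Fredholm. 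This completes the identification of $\tilde{T}$ as the asserted partial isometry.

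The only genuine subtlety, and the step I would watch most carefully, is the spectral-gap argument: one must invoke Fredholmness to guarantee that $0$ is isolated in $\operatorname{Sp}(A)$, so that the discontinuous-looking $g$ is in fact continuous on the spectrum and $g(A)$ is a bona fide orthogonal projection onto $(\ker T)^{\perp}$ rather than merely an operator with dense range. Everything else is routine functional calculus together with the two standard identities $\ker(T^{\ast}T)=\ker T$ and $(\ker T)^{\perp}=\overline{\operatorname{range}(T^{\ast})}$ recalled before the lemma.
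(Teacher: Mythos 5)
Your proposal is correct and follows essentially the same route as the paper's proof: both compute $\tilde{T}^{\ast}\tilde{T}=f(T^{\ast}T)(T^{\ast}T)f(T^{\ast}T)=\chi_{K}(T^{\ast}T)$ (your $g(A)$ is exactly this characteristic function), identify it as the orthogonal projection onto $(\ker T)^{\perp}$, and then use the invertibility of $f$ on $K$ to get $\operatorname{range}\big(\underline{(T^{\ast}T)^{-1/2}}\big)=(\ker T)^{\perp}$ and hence $\operatorname{range}(\tilde{T})=T\big((\ker T)^{\perp}\big)=\operatorname{range}(T)$. The only cosmetic difference is that you emphasize the isolation of $0$ in $\operatorname{Sp}(T^{\ast}T)$ explicitly, which the paper leaves implicit in the compactness of $K$.
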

\begin{proof}
Note that since $T^{\ast}T$ is a positive Fredholm operator, the set
$K$ is a compact subset of $ ( 0,\infty) $.

By the spectral theory of self-adjoint operators,
\[
\tilde{T}^{\ast}\tilde{T}\equiv\underline{( T^{\ast}T) ^{-1/2}%
}T^{\ast}T\underline{( T^{\ast}T ) ^{-1/2}}\equiv f (
T^{\ast}T ) ( T^{\ast}T ) f ( T^{\ast}T )
=\chi_{K} ( T^{\ast}T )
\]
for the characteristic function $\chi_{K}$ on $\operatorname{Sp} (
T^{\ast}T ) $, and hence $\tilde{T}^{\ast}\tilde{T}$ is the orthogonal
projection from~$\mathcal{H}$ onto $\operatorname{range} ( T^{\ast}T) $. Thus $\tilde{T}$ annihilates
\[
\ker\big(\tilde{T}\big) \equiv\ker\big( \tilde{T}^{\ast}\tilde
{T}\big) \equiv\big( \operatorname{range}\big( \tilde{T}^{\ast}%
\tilde{T}\big) \big) ^{\perp}= ( \operatorname{range} (
T^{\ast}T ) ) ^{\perp}\equiv\ker ( T^{\ast}T )
\equiv\ker ( T )
\]
and is metric preserving on
\[
\operatorname{range}( T^{\ast}T) \equiv( \ker(
T^{\ast}T) ) ^{\perp}\equiv( \ker(T)
) ^{\perp}\equiv\operatorname{range}( T^{\ast})
\]
due to
\[
\big\langle \tilde{T}(v) ,\tilde{T}(w)
\big\rangle =\big\langle \big( \tilde{T}^{\ast}\tilde{T}\big) (
v) ,w\big\rangle =\langle v,w\rangle \qquad \text{for all }v,w\in\operatorname{range}( T^{\ast}T) ,
\]
i.e., $\tilde{T}$ is a partial isometry sending $\operatorname{range} (
T^{\ast}T ) $ isometrically onto $\operatorname{range} ( \tilde
{T} ) $ while annihilating $\ker ( T ) $.

It remains to show that $\operatorname{range}(\tilde{T})
=\operatorname{range}(T) $. In fact, since $\frac{1}{f|_{K}}$
is a well-defined continuous function on $K$, the restriction of $ (
T^{\ast}T ) ^{-1/2}\equiv f ( T^{\ast}T ) $ to
$\operatorname{range}( T^{\ast}T) $ is an invertible linear
operator on $\operatorname{range} ( T^{\ast}T ) $ and hence
\[
\operatorname{range}\big( ( T^{\ast}T ) ^{-1/2}\big)
=\operatorname{range} ( T^{\ast}T ) \equiv ( \ker (T )) ^{\perp}.
\]
Thus we get
\[
\operatorname{range}(\tilde{T}) \equiv\operatorname{range}
\big( T ( T^{\ast}T ) ^{-1/2}\big) =T ( (
\ker(T) ) ^{\perp} ) =\operatorname{range} (T) .\tag*{\qed}
\]
\renewcommand{\qed}{}
\end{proof}

At each fixed $t_{1}\in\mathbb{T}$, by applying Lemma~\ref{Fredholm} to the Fredholm
operator values of the norm continuous $\mathbb{T}$-families $x_{1}$ and
$x_{2}$, we get two partial isometries
\[
\tilde{x}_{1}:=x_{1}\underline{( x_{1}^{\ast}x_{1}) ^{-1/2}}
\]
and
\[
\tilde{x}_{2}:=x_{2}\underline{( x_{2}^{\ast}x_{2}) ^{-1/2}%
}=x_{2}( x_{2}^{\ast}x_{2}) ^{-1/2}%
\]
where
\[
\underline{( x_{2}^{\ast}x_{2}) ^{-1/2}}=( x_{2}^{\ast
}x_{2}) ^{-1/2}\equiv\big( ( x_{2}^{\ast}x_{2})
^{-1}\big) ^{1/2}\equiv\sqrt{( x_{2}^{\ast}x_{2}) ^{-1}}%
\]
is a well-defined invertible operator on $\ell^{2}( \mathbb{Z}_{\geq
}) $ since $\ker( x_{2}) =0$ and hence the spectrum of the
positive Fredholm operator $x_{2}^{\ast}x_{2}$ is a compact subset of $(
0,\infty) $, implying the invertibility of $x_{2}^{\ast}x_{2}$ and
making the functional calculus $( x_{2}^{\ast}x_{2})
^{-1/2}\equiv\big( ( x_{2}^{\ast}x_{2}) ^{-1}\big) ^{1/2}$ meaningful.

\begin{theorem}\label{generators} The $C^*$-algebra $C\big(\mathbb{C}P_{q,c}^{1}\big)$ coincides with $C^{\ast} (
 \{ \tilde{x}_{1}^{\ast}\tilde{x}_{2},x_{1}^{\ast}x_{1} \} )
$, i.e.\ the $C^*$-algebra generated by two $\mathbb{T}$-families $x_{1}^{\ast}x_{1}\geq0$ and
$\tilde{x}_{1}^{\ast}\tilde{x}_{2}$ of operators on $\ell^{2} (
\mathbb{Z}_{\geq} ) $ such that at each fixed $t_{1}\mathbb{\in
}\mathbb{T}$, $\tilde{x}_{1}^{\ast}\tilde{x}_{2}$ is an isometry of index $-2$
$($with a zero kernel and a range of codimension~$2)$, where $\tilde{x}%
_{1}^{\ast}$ and $\tilde{x}_{2}=x_{2} ( x_{2}^{\ast}x_{2} )
^{-1/2}$ are isometries of index $-1$ while $\tilde{x}_{1}=x_{1}%
\underline{ ( x_{1}^{\ast}x_{1} ) ^{-1/2}}$ and $\tilde{x}%
_{2}^{\ast}$ are surjective partial isometries of index $1$.
\end{theorem}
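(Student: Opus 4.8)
The plan is to prove the statement in two stages: first to pin down the Fredholm and isometry structure of the partial isometries built from $x_{1}$ and $x_{2}$, and then to establish the algebra identity by mutual inclusion. Write $A:=C^{\ast}(\{x_{1}^{\ast}x_{2},x_{1}^{\ast}x_{1}\})=C(\mathbb{C}P_{q,c}^{1})$ and $B:=C^{\ast}(\{\tilde{x}_{1}^{\ast}\tilde{x}_{2},x_{1}^{\ast}x_{1}\})$; both are compared inside the $C^{\ast}$-algebra of norm-continuous $\mathbb{T}$-families of operators on $\ell^{2}(\mathbb{Z}_{\geq})$, and all index and isometry assertions are read fiberwise at each fixed $t_{1}\in\mathbb{T}$.

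For the first stage I would feed the kernel/cokernel data computed above into Lemma~\ref{Fredholm}. Since $\ker(x_{2})=0$ with $\dim(\operatorname{coker}(x_{2}))=1$, the lemma makes $\tilde{x}_{2}=x_{2}(x_{2}^{\ast}x_{2})^{-1/2}$ an isometry (its initial space is all of $\ell^{2}(\mathbb{Z}_{\geq})$) with range $\operatorname{range}(x_{2})$ of codimension $1$, hence an isometry of index $-1$, and its adjoint $\tilde{x}_{2}^{\ast}$ a surjective partial isometry of index $1$. Dually, since $\dim(\ker(x_{1}))=1$ with $x_{1}$ surjective, $\tilde{x}_{1}=x_{1}\underline{(x_{1}^{\ast}x_{1})^{-1/2}}$ is a surjective partial isometry of index $1$ and $\tilde{x}_{1}^{\ast}$ an isometry of index $-1$. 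Finally $\tilde{x}_{1}^{\ast}\tilde{x}_{2}$ is a product of two isometries, so a short computation using $\tilde{x}_{1}\tilde{x}_{1}^{\ast}=I$ and $\tilde{x}_{2}^{\ast}\tilde{x}_{2}=I$ gives $(\tilde{x}_{1}^{\ast}\tilde{x}_{2})^{\ast}(\tilde{x}_{1}^{\ast}\tilde{x}_{2})=I$, i.e.\ it is an isometry; additivity of the Fredholm index then yields index $-1+(-1)=-2$, so zero kernel and range of codimension $2$.

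For the second stage the engine is the pair of polar-decomposition identities $x_{j}=\tilde{x}_{j}(x_{j}^{\ast}x_{j})^{1/2}$ (valid because $\underline{(x_{j}^{\ast}x_{j})^{-1/2}}(x_{j}^{\ast}x_{j})^{1/2}$ is the orthogonal projection onto $(\ker x_{j})^{\perp}$, which $x_{j}$ leaves unchanged on the right). These give the two conversion formulas
\[
x_{1}^{\ast}x_{2}=(x_{1}^{\ast}x_{1})^{1/2}\,\tilde{x}_{1}^{\ast}\tilde{x}_{2}\,(x_{2}^{\ast}x_{2})^{1/2},\qquad \tilde{x}_{1}^{\ast}\tilde{x}_{2}=\underline{(x_{1}^{\ast}x_{1})^{-1/2}}\,x_{1}^{\ast}x_{2}\,(x_{2}^{\ast}x_{2})^{-1/2}.
\]
To run both inclusions I need each auxiliary factor to lie in the relevant algebra. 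The functions $\lambda\mapsto\lambda^{1/2}$ and the $f$ defining $\underline{(\bullet)^{-1/2}}$ both vanish at $0$, so $(x_{1}^{\ast}x_{1})^{1/2}$ and $\underline{(x_{1}^{\ast}x_{1})^{-1/2}}$ belong to $C^{\ast}(x_{1}^{\ast}x_{1})\subseteq A\cap B$. The unit lies in $A$ by the relation $x_{1}^{\ast}x_{1}+q^{2}x_{2}^{\ast}x_{2}=q^{2}+c$ derived above, and it lies in $B$ because $\tilde{x}_{1}^{\ast}\tilde{x}_{2}$ is an isometry, so $I=(\tilde{x}_{1}^{\ast}\tilde{x}_{2})^{\ast}(\tilde{x}_{1}^{\ast}\tilde{x}_{2})\in B$. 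Consequently $x_{2}^{\ast}x_{2}=1+cq^{-2}-q^{-2}x_{1}^{\ast}x_{1}$ lies in both algebras; since $x_{2}^{\ast}x_{2}$ is a positive Fredholm operator with trivial kernel, its spectrum is a compact subset of $(0,\infty)$, uniformly in $t_{1}\in\mathbb{T}$ by compactness of $\mathbb{T}$, so by spectral permanence it is invertible in each unital algebra and $(x_{2}^{\ast}x_{2})^{\pm1/2}$ are given by continuous functional calculus there. The first formula then puts $x_{1}^{\ast}x_{2}$ into $B$, giving $A\subseteq B$, and the second puts $\tilde{x}_{1}^{\ast}\tilde{x}_{2}$ into $A$, giving $B\subseteq A$; hence $A=B$.

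I expect the main obstacle to be careful bookkeeping rather than a single deep step. The non-invertibility of $x_{1}^{\ast}x_{1}$ (whose kernel equals $\ker(x_{1})$) forces the truncated functional calculus $\underline{(x_{1}^{\ast}x_{1})^{-1/2}}$ and means the polar identities must be justified exactly via Lemma~\ref{Fredholm} rather than by naive cancellation. The one genuinely essential leverage point is obtaining $1\in B$: without the isometry property of $\tilde{x}_{1}^{\ast}\tilde{x}_{2}$ one cannot recover $x_{2}^{\ast}x_{2}$, hence $(x_{2}^{\ast}x_{2})^{1/2}$, from the generators of $B$, so the index computation of the first stage is not decorative but is precisely what makes the inclusion $A\subseteq B$ go through.
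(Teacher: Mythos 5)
Your proposal is correct and takes essentially the same route as the paper's proof: the same Fredholm/index bookkeeping via Lemma~\ref{Fredholm}, the same pair of conversion formulas $x_{1}^{\ast}x_{2}=(x_{1}^{\ast}x_{1})^{1/2}\,\tilde{x}_{1}^{\ast}\tilde{x}_{2}\,(x_{2}^{\ast}x_{2})^{1/2}$ and $\tilde{x}_{1}^{\ast}\tilde{x}_{2}=\underline{(x_{1}^{\ast}x_{1})^{-1/2}}\,x_{1}^{\ast}x_{2}\,(x_{2}^{\ast}x_{2})^{-1/2}$ for the two inclusions, and the same use of $(\tilde{x}_{1}^{\ast}\tilde{x}_{2})^{\ast}(\tilde{x}_{1}^{\ast}\tilde{x}_{2})=1$ together with $x_{2}^{\ast}x_{2}=1+cq^{-2}-q^{-2}x_{1}^{\ast}x_{1}$ to make the generator $x_{2}^{\ast}x_{2}$ redundant. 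Your added remarks on why the auxiliary functional-calculus factors lie in the respective algebras only make explicit what the paper leaves implicit.
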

\begin{proof} By Lemma \ref{Fredholm}, the surjective Fredholm operator $x_{1}$ with kernel of
dimension $1$ yields a surjective partial isometry $\tilde{x}_{1}%
=x_{1}\underline{( x_{1}^{\ast}x_{1}) ^{-1/2}}$ of index $1$ and
the injective Fredholm $x_{2}$ with cokernel of dimension $1$ yields an
isometry $\tilde{x}_{2}=x_{2}( x_{2}^{\ast}x_{2}) ^{-1/2}$ of
index~$-1$.

Now both $\tilde{x}_{2}$ and the adjoint $\tilde{x}_{1}^{\ast}$ of the
surjective partial isometry $\tilde{x}_{1}$ are isometries of index~$-1$, and
hence $\tilde{x}_{1}^{\ast}\tilde{x}_{2}$ is an isometry of index $-2$ with
$( \tilde{x}_{1}^{\ast}\tilde{x}_{2}) ^{\ast}( \tilde
{x}_{1}^{\ast}\tilde{x}_{2}) =1$.

From the definition of $\tilde{x}_{i}$, we get
\[
\tilde{x}_{1}^{\ast}\tilde{x}_{2}=\underline{( x_{1}^{\ast}x_{1})
^{-1/2}}x_{1}^{\ast}x_{2}( x_{2}^{\ast}x_{2}) ^{-1/2}\in C^{\ast
}( \{ x_{1}^{\ast}x_{2},x_{1}^{\ast}x_{1},x_{2}^{\ast}%
x_{2}\}) =C\big(\mathbb{C}P_{q,c}^{1}\big) .
\]
Since $( x_{1}^{\ast}x_{1}) ^{1/2}\underline{(
x_{1}^{\ast}x_{1}) ^{-1/2}}$ is the orthogonal projection onto
$\operatorname{range}( x_{1}^{\ast}x_{1}) \equiv
\operatorname{range}( x_{1}^{\ast}) $, by spectral theory:
\[
x_{1}^{\ast}x_{2}=( x_{1}^{\ast}x_{1}) ^{1/2}\underline{(
x_{1}^{\ast}x_{1}) ^{-1/2}}x_{1}^{\ast}x_{2}=( x_{1}^{\ast}%
x_{1}) ^{1/2}\tilde{x}_{1}^{\ast}\tilde{x}_{2}( x_{2}^{\ast}%
x_{2}) ^{1/2}\in C^{\ast}( \{ \tilde{x}_{1}^{\ast}\tilde
{x}_{2},x_{1}^{\ast}x_{1},x_{2}^{\ast}x_{2}\}) .
\]
So we get
\[
C\big(\mathbb{C}P_{q,c}^{1}\big) \equiv C^{\ast}(\{
x_{1}^{\ast}x_{2},x_{1}^{\ast}x_{1},x_{2}^{\ast}x_{2}\})
=C^{\ast}(\{ \tilde{x}_{1}^{\ast}\tilde{x}_{2},x_{1}^{\ast}%
x_{1},x_{2}^{\ast}x_{2}\} ) .
\]

Furthermore the generator $x_{2}^{\ast}x_{2}$ is redundant since%
\[
x_{2}^{\ast}x_{2}=1+cq^{-2}-q^{-2}x_{1}^{\ast}x_{1}=\big( 1+cq^{-2}\big)
 ( \tilde{x}_{1}^{\ast}\tilde{x}_{2} ) ^{\ast} ( \tilde{x}%
_{1}^{\ast}\tilde{x}_{2} ) -q^{-2}x_{1}^{\ast}x_{1}
\]
can be generated by $\tilde{x}_{1}^{\ast}\tilde{x}_{2}$ and $x_{1}^{\ast}%
x_{1}$. Thus
\[
C\big(\mathbb{C}P_{q,c}^{1}\big) \equiv C^{\ast} ( \{
\tilde{x}_{1}^{\ast}\tilde{x}_{2},x_{1}^{\ast}x_{1},x_{2}^{\ast}x_{2} \}
 ) =C^{\ast} ( \{ \tilde{x}_{1}^{\ast}\tilde{x}_{2},x_{1}^{\ast}x_{1} \} ) .\tag*{\qed}
\]
\renewcommand{\qed}{}
\end{proof}

We remark that for each $i\in \{ 1,2 \} $,
\[
\tilde{x}_{i}^{\ast}\tilde{x}_{i}=\underline{( x_{i}^{\ast}x_{i})
^{-1/2}}x_{i}^{\ast}x_{i}\underline{( x_{i}^{\ast}x_{i}) ^{-1/2}%
}=\chi_{\operatorname{Sp}( x_{i}^{\ast}x_{i}) \backslash\{
0\} }( x_{i}^{\ast}x_{i}) \in C^{\ast}( \{
x_{i}^{\ast}x_{i}\} )
\]
and hence belongs to $C\big(\mathbb{C}P_{q,c}^{1}\big) \equiv C^{\ast
} ( \{ \tilde{x}_{1}^{\ast}\tilde{x}_{2},x_{1}^{\ast}x_{1}%
,x_{2}^{\ast}x_{2} \} ) $, since any $C^*$-algebra is closed under
functional calculus by continuous functions vanishing at~$0$.

\section{Invariant subspace decomposition}

In this section, fixing an arbitrary value of the parameter $t_{1}%
\equiv\overline{t_{2}}\in\mathbb{T}$, we study and treat any $\mathbb{T}%
$-family of operators on $\ell^{2} ( \mathbb{Z}_{\geq} ) $,
including any element of $C\big(\mathbb{C}P_{q,c}^{1}\big) $, as an
operator in $\mathcal{B}\big( \ell^{2} ( \mathbb{Z}_{\geq} )\big) $.

As such the index-$1$ surjective partial isometry $\tilde{x}_{1}$ has
$\ker ( \tilde{x}_{1} ) =\ker ( x_{1} ) =\mathbb{C}%
v_{1}$ for some unit vector $v_{1}$, and
\[
p_{1}:=1-\tilde{x}_{1}^{\ast}\tilde{x}_{1}\in C^{\ast} ( \{
\tilde{x}_{1}^{\ast}\tilde{x}_{2},\tilde{x}_{1}^{\ast}\tilde{x}_{1} \}
 ) \subset C\big(\mathbb{C}P_{q,c}^{1}\big)
\]
is the rank-$1$ orthogonal projection onto $\mathbb{C}v_{1}$. Note that
$ ( \tilde{x}_{1}^{\ast}\tilde{x}_{1} ) ( v_{1} ) =0$
is equivalent to $\tilde{x}_{1} ( v_{1} ) =0$ (or equivalently
$v_{1}\perp\operatorname{range} ( \tilde{x}_{1}^{\ast} )
=\operatorname{range} ( \tilde{x}_{1}^{\ast}\tilde{x}_{1} ) $).

Note that
\[
p_{2}:=\tilde{x}_{1}^{\ast}\tilde{x}_{1}- ( \tilde{x}_{2}^{\ast}\tilde
{x}_{1} ) ^{\ast} ( \tilde{x}_{2}^{\ast}\tilde{x}_{1} )
=\tilde{x}_{1}^{\ast}\tilde{x}_{1}- ( \tilde{x}_{1}^{\ast}\tilde{x}%
_{2} ) ( \tilde{x}_{2}^{\ast}\tilde{x}_{1} ) =\tilde{x}%
_{1}^{\ast} ( 1-\tilde{x}_{2}\tilde{x}_{2}^{\ast} ) \tilde{x}_{1}%
\]
is also a rank-1 projection onto $\mathbb{C}v_{2}$ for some unit vector
$v_{2}$. In fact $p_{2}$ clearly annihilates $\ker ( \tilde{x}_{1} )
$ and can be viewed as the conjugation of the rank-$1$ projection $1-\tilde{x}_{2}\tilde{x}_{2}%
^{\ast}$ (onto the kernel of $\tilde{x}_{2}^{\ast}$) by the unitary
operator $\tilde{x}_{1}|_{( \ker( \tilde{x}_{1}))
^{\perp}}$, from $( \ker( \tilde{x}_{1})) ^{\perp}$
onto $\ell^{2}( \mathbb{Z}_{\geq}) $. In an explicit description, $v_{2}$
can be taken as the inverse image under $\tilde{x}_{1}|_{( \ker( \tilde{x}_{1})) ^{\perp}}$,
of any unit vector in the $1$-dimensional range of $1-\tilde{x}_{2}\tilde{x}%
_{2}^{\ast}$, and, in particular, $v_{2}\in( \ker( \tilde{x}
_{1})) ^{\perp}\equiv\operatorname{range}( \tilde{x}_{1}^{\ast}\tilde{x}_{1})$.
The inequalities
\[
0\leq p_{2}=\tilde{x}_{1}^{\ast}\tilde{x}_{1}-( \tilde{x}_{1}^{\ast
}\tilde{x}_{2}) ( \tilde{x}_{2}^{\ast}\tilde{x}_{1})
\leq\tilde{x}_{1}^{\ast}\tilde{x}_{1}
\]
relate the three projections $p_{2}$, $( \tilde{x}_{1}^{\ast}
\tilde{x}_{2}) ( \tilde{x}_{2}^{\ast}\tilde{x}_{1}) $, and
$\tilde{x}_{1}^{\ast}\tilde{x}_{1}$ in $C^{\ast}(\{ \tilde{x}
_{1}^{\ast}\tilde{x}_{2},\tilde{x}_{1}^{\ast}\tilde{x}_{1}\})
$, and clarify their geometric relation: $p_{2}$ and $( \tilde
{x}_{1}^{\ast}\tilde{x}_{2}) ( \tilde{x}_{2}^{\ast}\tilde{x}%
_{1}) $ are projections onto two mutually orthogonal subspaces which
add up to the range of the projection $\tilde{x}_{1}^{\ast}\tilde{x}_{1}$,
i.e.,
\[
\operatorname{range}( p_{2}) \oplus^{\perp}\operatorname{range}%
(( \tilde{x}_{1}^{\ast}\tilde{x}_{2}) ( \tilde
{x}_{2}^{\ast}\tilde{x}_{1})) =\operatorname{range}(
\tilde{x}_{1}^{\ast}\tilde{x}_{1}) ,
\]
indicating, in particular, $v_{2}\in\operatorname{range}( p_{2})
\subset\operatorname{range}( \tilde{x}_{1}^{\ast}\tilde{x}_{1})$.

Now $v_{2}\perp v_{1}$ since $v_{2}$ is in the range of the self-adjoint
operator $\tilde{x}_{1}^{\ast}\tilde{x}_{1}$ and hence is perpendicular to
$\ker( \tilde{x}_{1}^{\ast}\tilde{x}_{1}) =\mathbb{C}v_{1}$.
Furthermore,
\[
v_{i+2n}:=( \tilde{x}_{1}^{\ast}\tilde{x}_{2}) ^{n}(v_{i})
\]
with $n\geq0$ and $i\in\{ 1,2\} $ are orthonormal vectors, since
\[
v_{1}\perp\operatorname{range}( \tilde{x}_{1}^{\ast})
\supset\operatorname{range}( \tilde{x}_{1}^{\ast}\tilde{x}_{2})
\ni( \tilde{x}_{1}^{\ast}\tilde{x}_{2}) ( v_{1})
\]
and
\[
v_{2}\perp\operatorname{range}(( \tilde{x}_{1}^{\ast}\tilde
{x}_{2}) ( \tilde{x}_{2}^{\ast}\tilde{x}_{1}))
=\operatorname{range}( \tilde{x}_{1}^{\ast}\tilde{x}_{2})
\qquad \text{with\ \ }v_{1}\perp v_{2}\, .
\]
Thus $\mathcal{V}:=\operatorname{Span}\{ v_{1},v_{2}\}
\perp\operatorname{range}( \tilde{x}_{1}^{\ast}\tilde{x}_{2}) $
or more precisely, by combining with the fact that the index-$(
-2) $ isometry $\tilde{x}_{1}^{\ast}\tilde{x}_{2}$ has
$\operatorname{range}( \tilde{x}_{1}^{\ast}\tilde{x}_{2}) $ of
codimension $2$,
\[
\mathcal{H}=\mathcal{V~}\oplus^{\perp}\operatorname{range}( \tilde
{x}_{1}^{\ast}\tilde{x}_{2}) \qquad \text{as Hilbert space orthogonal
direct sum.}
\]
Hence, since $\tilde{x}_{1}^{\ast}\tilde{x}_{2}$ is an isometry, we inductively get:
\[
\operatorname{range}( \tilde{x}_{1}^{\ast}\tilde{x}_{2})
^{k-1}=( \tilde{x}_{1}^{\ast}\tilde{x}_{2}) ^{k-1}(
\mathcal{V}) \oplus^{\perp}\operatorname{range}( \tilde{x}_{1}^{\ast}\tilde{x}_{2}) ^{k}
\]
for all $k\geq1$.

Clearly, for each $i\in\{ 1,2\} $, the operator $\tilde{x}
_{1}^{\ast}\tilde{x}_{2}$ restricted to the closed linear span $\mathcal{H}
_{i}\subset\ell^{2}( \mathbb{Z}_{\geq}) $ of $\{
v_{i+2n}\colon n\geq0\} $ is a unilateral shift $\mathcal{S}$, while the
orthogonal projection onto $\mathbb{C}v_{i}$ is $p_{i}\in C^{\ast}(
\{ \tilde{x}_{1}^{\ast}\tilde{x}_{2},\tilde{x}_{1}^{\ast}\tilde{x}%
_{1}\})$.

Since $\tilde{x}_{1}^{\ast}\tilde{x}_{2}$ is a unilateral shift simultaneously
on both $\mathcal{H}_{1}$ and $\mathcal{H}_{2}$, it generates a $C^*$-algebra
$C^{\ast}(\{ \tilde{x}_{1}^{\ast}\tilde{x}_{2}\}
|_{\mathcal{H}_{1}\oplus\mathcal{H}_{2}}) $ containing two
``synchronized'' copies of the ideal of
compact operators, i.e.,
\[
C^{\ast}( \{ \tilde{x}_{1}^{\ast}\tilde{x}_{2}\}
|_{\mathcal{H}_{1}\oplus\mathcal{H}_{2}}) \supset\big\{ T\oplus
T\colon T\in\mathcal{K}\big( \ell^{2}( \mathbb{Z}_{\geq}) \big)
\big\} \cong\mathcal{K}\big( \ell^{2}( \mathbb{Z}_{\geq})\big),
\]
where $\mathcal{H}_{1}$ and $\mathcal{H}_{2}$ are identified with the same
Hilbert space $\ell^{2}( \mathbb{Z}_{\geq}) $ in a canonical way,
i.e., identifying $v_{i+2n}$ for $i\in\{ 1,2\} $ with the
canonical orthonormal basis vector $e_{n}\in\ell^{2}( \mathbb{Z}_{\geq
}) $.

However our goal is to show that $C\big(\mathbb{C}P_{q,c}^{1}\big) $ or
for now $C^{\ast} ( \{ \tilde{x}_{1}^{\ast}\tilde{x}_{2},\tilde
{x}_{1}^{\ast}\tilde{x}_{1} \} |_{\mathcal{H}_{1}\oplus\mathcal{H}_{2}
} ) $ contains the\ direct sum $\mathcal{K} ( \mathcal{H}
_{1} ) \oplus\mathcal{K}( \mathcal{H}_{2} ) $ of all
``non-synchronized'' pairs of compact
operators. This can be achieved by noticing that for any $k,m\in
\mathbb{Z}_{\geq}$,
\[
\varepsilon_{k,m}^{( 1) }:=( \tilde{x}_{1}^{\ast}\tilde
{x}_{2}) ^{k}p_{1}( ( \tilde{x}_{1}^{\ast}\tilde{x}%
_{2}) ^{\ast}) ^{m}|_{\mathcal{H}_{1}\oplus\mathcal{H}_{2}}\in
C^{\ast}( \{ \tilde{x}_{1}^{\ast}\tilde{x}_{2},\tilde{x}_{1}%
^{\ast}\tilde{x}_{1}\} |_{\mathcal{H}_{1}\oplus\mathcal{H}_{2}})
\]
is a typical matrix unit in $\mathcal{K}( \mathcal{H}_{1})
\oplus0$ sending $v_{1+2m}$ to $v_{1+2k}$ while eliminating all other
$v_{i+2n}$ with $i+2n\neq1+2m$, and we get $\mathcal{K}( \mathcal{H}%
_{1}) \oplus0$ as the closure of the linear span of $\varepsilon
_{k,m}^{( 1) }$ with $k,m\in\mathbb{Z}_{\geq}$. Similarly the
elements
\[
\varepsilon_{k,m}^{(2) }:=( \tilde{x}_{1}^{\ast}\tilde
{x}_{2}) ^{k}p_{2}( ( \tilde{x}_{1}^{\ast}\tilde{x}%
_{2}) ^{\ast}) ^{m}|_{\mathcal{H}_{1}\oplus\mathcal{H}_{2}}\in
C^{\ast}( \{ \tilde{x}_{1}^{\ast}\tilde{x}_{2},\tilde{x}_{1}%
^{\ast}\tilde{x}_{1}\} |_{\mathcal{H}_{1}\oplus\mathcal{H}_{2}})
\]
with $k,m\in\mathbb{Z}_{\geq}$ linearly span a dense subspace of
$0\oplus\mathcal{K}( \mathcal{H}_{2}) $. Thus
\[
\mathcal{K}( \mathcal{H}_{1}) \oplus\mathcal{K}(
\mathcal{H}_{2}) =( \mathcal{K}( \mathcal{H}_{1})
\oplus0) +( 0\oplus\mathcal{K}( \mathcal{H}_{2})
) \subset C^{\ast}( \{ \tilde{x}_{1}^{\ast}\tilde{x}%
_{2},\tilde{x}_{1}^{\ast}\tilde{x}_{1}\} |_{\mathcal{H}_{1}%
\oplus\mathcal{H}_{2}}) .
\]

Next we want to show that each $v_{k}$ with $k\geq1$ is an eigenvector of
$x_{1}^{\ast}x_{1}$ or equivalently of $x_{2}^{\ast}x_{2}=1+cq^{-2}%
-q^{-2}x_{1}^{\ast}x_{1}$, and hence each $\mathcal{H}_{i}$ is invariant under
$x_{1}^{\ast}x_{1}$ and $x_{2}^{\ast}x_{2}$.

\begin{proposition}\label{intertwine} The isometry $\tilde{x}_{1}^{\ast}\tilde{x}_{2}$
intertwines the positive operators $x_{1}^{\ast}x_{1}$ and $(
1+c) -x_{2}^{\ast}x_{2}$, i.e.,
\[
( x_{1}^{\ast}x_{1}) ( \tilde{x}_{1}^{\ast}\tilde{x}%
_{2}) =( \tilde{x}_{1}^{\ast}\tilde{x}_{2}) (
1+c-x_{2}^{\ast}x_{2}) .
\]
\end{proposition}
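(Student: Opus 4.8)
The plan is to prove the intertwining first for the algebra generator $x_1^{\ast}x_2$ itself and then transfer it to the normalized generator $\tilde{x}_1^{\ast}\tilde{x}_2$ via the polar factorization already recorded inside the proof of Theorem~\ref{generators}. The starting point is the elementary identity
\[
(x_1^{\ast}x_1)(x_1^{\ast}x_2)=(x_1^{\ast}x_2)\big((1+c)-x_2^{\ast}x_2\big),
\]
which I would derive directly from the relation $x_1x_1^{\ast}=(1+c)-x_2x_2^{\ast}$ (a rewriting of $x_1x_1^{\ast}+x_2x_2^{\ast}=1+c$): regrouping gives $(x_1^{\ast}x_1)(x_1^{\ast}x_2)=x_1^{\ast}(x_1x_1^{\ast})x_2=x_1^{\ast}\big((1+c)-x_2x_2^{\ast}\big)x_2$, and distributing — noting that the sandwiched $x_2x_2^{\ast}$ recombines with the outer $x_2$ as $x_1^{\ast}x_2x_2^{\ast}x_2=(x_1^{\ast}x_2)(x_2^{\ast}x_2)$ — yields the claim. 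Thus the intertwining already holds with $x_1^{\ast}x_2$ in place of $\tilde{x}_1^{\ast}\tilde{x}_2$; the content of the proposition is to replace $x_1^{\ast}x_2$ by its normalization.

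To do this I would substitute the factorization $x_1^{\ast}x_2=(x_1^{\ast}x_1)^{1/2}\,\tilde{x}_1^{\ast}\tilde{x}_2\,(x_2^{\ast}x_2)^{1/2}$ from the proof of Theorem~\ref{generators} into the identity above. Writing $W:=\tilde{x}_1^{\ast}\tilde{x}_2$ and using that $(1+c)-x_2^{\ast}x_2$ commutes with $(x_2^{\ast}x_2)^{1/2}$ (both being continuous functions of the positive operator $x_2^{\ast}x_2$), the identity becomes
\[
(x_1^{\ast}x_1)^{3/2}\,W\,(x_2^{\ast}x_2)^{1/2}=(x_1^{\ast}x_1)^{1/2}\,W\,\big((1+c)-x_2^{\ast}x_2\big)(x_2^{\ast}x_2)^{1/2}.
\]
Since $x_2$ is an injective Fredholm operator, $x_2^{\ast}x_2$ is invertible, so the common right factor $(x_2^{\ast}x_2)^{1/2}$ can be cancelled; pulling one copy of $(x_1^{\ast}x_1)^{1/2}$ out on the left then gives $(x_1^{\ast}x_1)^{1/2}A=0$, where $A:=(x_1^{\ast}x_1)W-W\big((1+c)-x_2^{\ast}x_2\big)$ is exactly the difference that the proposition asserts to be zero.

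The main obstacle is that $(x_1^{\ast}x_1)^{1/2}$ is not invertible: its kernel is the line $\ker(x_1)=\mathbb{C}v_1$, so I cannot simply cancel it on the left. The relation $(x_1^{\ast}x_1)^{1/2}A=0$ only forces $\operatorname{range}(A)\subseteq\ker\big((x_1^{\ast}x_1)^{1/2}\big)=\ker(x_1)=\mathbb{C}v_1$, whence $A$ has rank at most one and acts by $Aw=\langle A^{\ast}v_1,w\rangle v_1$. It therefore suffices to check $A^{\ast}v_1=0$. Since $A^{\ast}=W^{\ast}(x_1^{\ast}x_1)-\big((1+c)-x_2^{\ast}x_2\big)W^{\ast}$, and since $x_1^{\ast}x_1$ annihilates $v_1\in\ker(x_1)=\ker(x_1^{\ast}x_1)$ while $W^{\ast}v_1=\tilde{x}_2^{\ast}\tilde{x}_1v_1=0$ (because $\tilde{x}_1v_1=0$), both terms of $A^{\ast}v_1$ vanish. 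Hence $A=0$, which is precisely the asserted intertwining relation.
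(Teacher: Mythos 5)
Your argument is correct. It rests on the same algebraic engine as the paper's proof, namely the relation $x_{1}x_{1}^{\ast}+x_{2}x_{2}^{\ast}=1+c$ sandwiched as $x_{1}^{\ast}\big((1+c)-x_{2}x_{2}^{\ast}\big)x_{2}$, but the mechanics are genuinely different. The paper computes $(x_{1}^{\ast}x_{1})(\tilde{x}_{1}^{\ast}\tilde{x}_{2})$ head-on: it writes $\tilde{x}_{1}^{\ast}=\underline{(x_{1}^{\ast}x_{1})^{-1/2}}\,x_{1}^{\ast}$ and $\tilde{x}_{2}=x_{2}(x_{2}^{\ast}x_{2})^{-1/2}$, commutes the functional-calculus factors past $x_{1}^{\ast}x_{1}$ and $x_{2}^{\ast}x_{2}$ respectively, and arrives at the result through a chain of exact equalities, never having to cancel anything non-invertible. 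You instead prove the intertwining for the unnormalized $x_{1}^{\ast}x_{2}$ and then transfer it through the factorization $x_{1}^{\ast}x_{2}=(x_{1}^{\ast}x_{1})^{1/2}\,\tilde{x}_{1}^{\ast}\tilde{x}_{2}\,(x_{2}^{\ast}x_{2})^{1/2}$; this forces you to divide out $(x_{1}^{\ast}x_{1})^{1/2}$ on the left, which is not invertible, and you correctly patch this with the rank-one argument (range of the defect operator $A$ lands in $\ker(x_{1})=\mathbb{C}v_{1}$, and $A^{\ast}v_{1}=0$ because $x_{1}^{\ast}x_{1}$ and $\tilde{x}_{1}$ both kill $v_{1}$). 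Each step checks out, including the cancellation of the invertible $(x_{2}^{\ast}x_{2})^{1/2}$ and the commutation of $(1+c)-x_{2}^{\ast}x_{2}$ with $(x_{2}^{\ast}x_{2})^{1/2}$. What your route buys is a conceptually clean two-stage picture (raw intertwining, then polar normalization) that makes transparent where the relation comes from; what it costs is the extra kernel analysis, which the paper's direct computation avoids entirely. One small caveat: your final kernel argument uses $\dim\ker(x_{1})=1$ only to say the defect has rank at most one, but in fact the same argument ($A^{\ast}$ vanishes on all of $\ker\big((x_{1}^{\ast}x_{1})^{1/2}\big)$) works regardless of that dimension, so you could state it without invoking $v_{1}$ explicitly.
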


\begin{proof}A direct computation shows
\begin{gather*}
( x_{1}^{\ast}x_{1}) ( \tilde{x}_{1}^{\ast}\tilde{x}_{2}) =x_{1}^{\ast}x_{1}\underline{( x_{1}^{\ast}x_{1})
^{-1/2}}x_{1}^{\ast}\tilde{x}_{2}=\underline{( x_{1}^{\ast}x_{1})
^{-1/2}}x_{1}^{\ast}x_{1}x_{1}^{\ast}\tilde{x}_{2}%
\\
\hphantom{( x_{1}^{\ast}x_{1}) ( \tilde{x}_{1}^{\ast}\tilde{x}_{2})}{}
=\underline{( x_{1}^{\ast}x_{1}) ^{-1/2}}x_{1}^{\ast}(
1+c-x_{2}x_{2}^{\ast}) \tilde{x}_{2}=\tilde{x}_{1}^{\ast}(
1+c-x_{2}x_{2}^{\ast}) \tilde{x}_{2}\\
\hphantom{( x_{1}^{\ast}x_{1}) ( \tilde{x}_{1}^{\ast}\tilde{x}_{2})}{}
=( 1+c) \tilde{x}%
_{1}^{\ast}\tilde{x}_{2}-\tilde{x}_{1}^{\ast}x_{2}x_{2}^{\ast}\tilde{x}_{2}
=( 1+c) \tilde{x}_{1}^{\ast}\tilde{x}_{2}-\tilde{x}_{1}^{\ast
}x_{2}x_{2}^{\ast}x_{2}( x_{2}^{\ast}x_{2}) ^{-1/2}\\
\hphantom{( x_{1}^{\ast}x_{1}) ( \tilde{x}_{1}^{\ast}\tilde{x}_{2})}{}
=(1+c) \tilde{x}_{1}^{\ast}\tilde{x}_{2}-\tilde{x}_{1}^{\ast}x_{2}(
x_{2}^{\ast}x_{2}) ^{-1/2}x_{2}^{\ast}x_{2}
=( 1+c) \tilde{x}_{1}^{\ast}\tilde{x}_{2}-( \tilde{x}%
_{1}^{\ast}\tilde{x}_{2}) ( x_{2}^{\ast}x_{2}) \\
\hphantom{( x_{1}^{\ast}x_{1}) ( \tilde{x}_{1}^{\ast}\tilde{x}_{2})}{}
=(\tilde{x}_{1}^{\ast}\tilde{x}_{2}) ( 1+c-x_{2}^{\ast}x_{2}) .\tag*{\qed}
\end{gather*}
\renewcommand{\qed}{}
\end{proof}

\begin{proposition}\label{eigen-intertwine} The isometry $\tilde{x}_{1}^{\ast}\tilde{x}_{2}$
intertwines the $($possibly degenerate$)$ eigenspaces $E_{\lambda}(
x_{2}^{\ast}x_{2}) $ and $E_{1+c-\lambda}( x_{1}^{\ast}%
x_{1}) $, where $E_{\lambda}(T) :=\ker(\lambda-T) $ for linear operators $T$ and $\lambda\in\mathbb{C}$. More
precisely,
\[
( \tilde{x}_{1}^{\ast}\tilde{x}_{2}) ( E_{\lambda}(
x_{2}^{\ast}x_{2}) ) \subset E_{1+c-\lambda}( x_{1}^{\ast}x_{1}) ,
\]
and
\[
( \tilde{x}_{1}^{\ast}\tilde{x}_{2}) ^{-1}( E_{1+c-\lambda
}( x_{1}^{\ast}x_{1}) ) \subset( E_{\lambda}(x_{2}^{\ast}x_{2}) ),
\]
where $( \tilde{x}_{1}^{\ast}\tilde{x}_{2}) ^{-1}(
E_{1+c-\lambda}( x_{1}^{\ast}x_{1}) ) $ is the inverse
image of $E_{1+c-\lambda}( x_{1}^{\ast}x_{1}) $ under $($the
non-surjective$)$ $\tilde{x}_{1}^{\ast}\tilde{x}_{2}$.
\end{proposition}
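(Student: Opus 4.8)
The plan is to derive both inclusions directly from the intertwining identity just established in Proposition~\ref{intertwine}, namely $( x_{1}^{\ast}x_{1}) ( \tilde{x}_{1}^{\ast}\tilde{x}_{2}) =( \tilde{x}_{1}^{\ast}\tilde{x}_{2}) ( 1+c-x_{2}^{\ast}x_{2}) $, together with the single structural fact recorded in Theorem~\ref{generators} that $\tilde{x}_{1}^{\ast}\tilde{x}_{2}$ is an isometry, and in particular is injective. No further spectral theory is needed; everything reduces to applying this operator identity to suitable eigenvectors and then canceling $\tilde{x}_{1}^{\ast}\tilde{x}_{2}$ on the left.

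For the first inclusion, I would take any $v\in E_{\lambda}( x_{2}^{\ast}x_{2}) $, so that $( 1+c-x_{2}^{\ast}x_{2}) v=( 1+c-\lambda) v$, and feed this into the intertwining identity: $( x_{1}^{\ast}x_{1}) ( \tilde{x}_{1}^{\ast}\tilde{x}_{2}) v=( \tilde{x}_{1}^{\ast}\tilde{x}_{2}) ( 1+c-x_{2}^{\ast}x_{2}) v=( 1+c-\lambda) ( \tilde{x}_{1}^{\ast}\tilde{x}_{2}) v$. This exhibits $( \tilde{x}_{1}^{\ast}\tilde{x}_{2}) v$ as an eigenvector of $x_{1}^{\ast}x_{1}$ with eigenvalue $1+c-\lambda$, i.e.\ $( \tilde{x}_{1}^{\ast}\tilde{x}_{2}) v\in E_{1+c-\lambda}( x_{1}^{\ast}x_{1}) $, which is exactly the asserted inclusion.

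For the second inclusion, I would take any $v$ in the inverse image, i.e.\ any $v$ with $( \tilde{x}_{1}^{\ast}\tilde{x}_{2}) v\in E_{1+c-\lambda}( x_{1}^{\ast}x_{1}) $, so that $( x_{1}^{\ast}x_{1}) ( \tilde{x}_{1}^{\ast}\tilde{x}_{2}) v=( 1+c-\lambda) ( \tilde{x}_{1}^{\ast}\tilde{x}_{2}) v$. Rewriting the left-hand side by the intertwining identity and subtracting gives $( \tilde{x}_{1}^{\ast}\tilde{x}_{2}) \big( ( 1+c-x_{2}^{\ast}x_{2}) -( 1+c-\lambda) \big) v=( \tilde{x}_{1}^{\ast}\tilde{x}_{2}) ( \lambda-x_{2}^{\ast}x_{2}) v=0$. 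Since $\tilde{x}_{1}^{\ast}\tilde{x}_{2}$ is an isometry, it has trivial kernel, so $( \lambda-x_{2}^{\ast}x_{2}) v=0$, that is $v\in E_{\lambda}( x_{2}^{\ast}x_{2}) $.

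The only point requiring care, and the closest thing to an obstacle, is the second inclusion: because $\tilde{x}_{1}^{\ast}\tilde{x}_{2}$ is \emph{not} surjective (its range has codimension $2$), the statement must be read in terms of honest preimages rather than an inverse operator. The resolution is that injectivity alone suffices to cancel $\tilde{x}_{1}^{\ast}\tilde{x}_{2}$ on the left once the computation has been reduced to $( \tilde{x}_{1}^{\ast}\tilde{x}_{2}) ( \lambda-x_{2}^{\ast}x_{2}) v=0$, so the non-surjectivity never actually interferes.
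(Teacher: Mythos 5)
Your proposal is correct and follows essentially the same route as the paper: both inclusions are read off from the intertwining identity of Proposition~\ref{intertwine}, with the injectivity of the isometry $\tilde{x}_{1}^{\ast}\tilde{x}_{2}$ used to cancel it on the left in the second inclusion. No gaps.
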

\begin{proof} The commutation relation
\[
( x_{1}^{\ast}x_{1}) ( \tilde{x}_{1}^{\ast}\tilde{x}%
_{2}) =( \tilde{x}_{1}^{\ast}\tilde{x}_{2}) (
( 1+c) -x_{2}^{\ast}x_{2})
\]
implies that if $v\in E_{\lambda}( x_{2}^{\ast}x_{2}) $ then
$( \tilde{x}_{1}^{\ast}\tilde{x}_{2}) (v) \in
E_{1+c-\lambda}( x_{1}^{\ast}x_{1}) $, because
\begin{gather*}
( x_{1}^{\ast}x_{1}) ( ( \tilde{x}_{1}^{\ast}\tilde{x}_{2}) (v) ) =( \tilde{x}_{1}^{\ast
}\tilde{x}_{2}) ( ( 1+c) -x_{2}^{\ast}x_{2})(v)\\
\hphantom{( x_{1}^{\ast}x_{1}) ( ( \tilde{x}_{1}^{\ast}\tilde{x}_{2}) (v) )}{}
=( \tilde{x}_{1}^{\ast}\tilde{x}_{2}) ( ( 1+c)
-\lambda) v=( ( 1+c) -\lambda) (( \tilde{x}_{1}^{\ast}\tilde{x}_{2}) (v) ).
\end{gather*}

On the other hand, if $( \tilde{x}_{1}^{\ast}\tilde{x}_{2})(v) \in E_{1+c-\lambda}( x_{1}^{\ast}x_{1}) $, then
\begin{gather*}
( ( 1+c) -\lambda) ( ( \tilde{x}_{1}^{\ast}\tilde{x}_{2}) (v) ) =(
x_{1}^{\ast}x_{1}) ( ( \tilde{x}_{1}^{\ast}\tilde{x}%
_{2}) (v) ) =( \tilde{x}_{1}^{\ast}\tilde
{x}_{2}) ( ( 1+c) -x_{2}^{\ast}x_{2}) (v)\\
\hphantom{( ( 1+c) -\lambda) ( ( \tilde{x}_{1}^{\ast}\tilde{x}_{2}) (v) )}{}
=( 1+c) ( \tilde{x}_{1}^{\ast}\tilde{x}_{2}) (
v) -( \tilde{x}_{1}^{\ast}\tilde{x}_{2}) ( (
x_{2}^{\ast}x_{2}) (v) ),
\end{gather*}
and hence $( \tilde{x}_{1}^{\ast}\tilde{x}_{2}) ( \lambda
v) =( \tilde{x}_{1}^{\ast}\tilde{x}_{2}) ( (x_{2}^{\ast}x_{2}) (v) ) $. Since $\tilde{x}_{1}^{\ast}\tilde{x}_{2}$ is injective, we get $\lambda v=( x_{2}^{\ast}x_{2}) (v) $, i.e., $v\in E_{\lambda}( x_{2}^{\ast}x_{2}) $.
\end{proof}

\begin{corollary}\label{eigenvalue} If $\lambda$ is an eigenvalue of $x_{2}^{\ast}x_{2}$,
then $1+c-\lambda$ is an eigenvalue of $x_{1}^{\ast}x_{1}$.
\end{corollary}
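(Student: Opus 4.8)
The plan is to read the statement directly off Proposition~\ref{eigen-intertwine}, whose first inclusion already transports eigenvectors of $x_{2}^{\ast}x_{2}$ to eigenvectors of $x_{1}^{\ast}x_{1}$ with the shifted eigenvalue $1+c-\lambda$. The only thing that needs checking is that this transport does not collapse a nonzero eigenvector to $0$, which is exactly what the injectivity of the isometry $\tilde{x}_{1}^{\ast}\tilde{x}_{2}$ guarantees.

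Concretely, I would argue as follows. Suppose $\lambda$ is an eigenvalue of $x_{2}^{\ast}x_{2}$; by definition $E_{\lambda}(x_{2}^{\ast}x_{2})\neq 0$, so I may pick a unit (in particular nonzero) vector $v\in E_{\lambda}(x_{2}^{\ast}x_{2})$. Applying the inclusion $(\tilde{x}_{1}^{\ast}\tilde{x}_{2})(E_{\lambda}(x_{2}^{\ast}x_{2}))\subset E_{1+c-\lambda}(x_{1}^{\ast}x_{1})$ from Proposition~\ref{eigen-intertwine} then yields $(\tilde{x}_{1}^{\ast}\tilde{x}_{2})(v)\in E_{1+c-\lambda}(x_{1}^{\ast}x_{1})$.

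It remains to observe that $(\tilde{x}_{1}^{\ast}\tilde{x}_{2})(v)\neq 0$. Here I would invoke Theorem~\ref{generators}: the operator $\tilde{x}_{1}^{\ast}\tilde{x}_{2}$ is an isometry, hence injective, so $v\neq 0$ forces $(\tilde{x}_{1}^{\ast}\tilde{x}_{2})(v)\neq 0$. Consequently $E_{1+c-\lambda}(x_{1}^{\ast}x_{1})\neq 0$, i.e., $1+c-\lambda$ is an eigenvalue of $x_{1}^{\ast}x_{1}$, as claimed.

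Since the only ingredients are the intertwining inclusion and injectivity, there is essentially no obstacle here and the statement is a formal corollary. The one point worth making explicit is precisely the role of injectivity: the eigenvalue relation is preserved under the merely (non-surjective) isometry $\tilde{x}_{1}^{\ast}\tilde{x}_{2}$ because an isometry cannot annihilate a nonzero vector, so it genuinely certifies that the target eigenspace is nonzero, in contrast to a general intertwiner which could send the eigenvector to $0$ and thereby fail to detect the eigenvalue.
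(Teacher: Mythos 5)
Your proof is correct and follows exactly the paper's argument: apply the inclusion $(\tilde{x}_{1}^{\ast}\tilde{x}_{2})(E_{\lambda}(x_{2}^{\ast}x_{2}))\subset E_{1+c-\lambda}(x_{1}^{\ast}x_{1})$ from Proposition~\ref{eigen-intertwine} and use injectivity of the isometry $\tilde{x}_{1}^{\ast}\tilde{x}_{2}$ to see the image is nonzero. No differences worth noting.
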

\begin{proof} If $E_{\lambda}( x_{2}^{\ast}x_{2}) \neq0$ then
$E_{1+c-\lambda}( x_{1}^{\ast}x_{1}) \supset( \tilde{x}%
_{1}^{\ast}\tilde{x}_{2}) ( E_{\lambda}( x_{2}^{\ast}%
x_{2}) ) \neq0$ since $\tilde{x}_{1}^{\ast}\tilde{x}_{2}$ is injective.
\end{proof}

The equality $x_{1}^{\ast}x_{1}+q^{2}x_{2}^{\ast}x_{2}=q^{2}+c$ implies
\[
E_{\lambda}( x_{2}^{\ast}x_{2}) =E_{q^{2}+c-q^{2}\lambda}(
x_{1}^{\ast}x_{1})
\]
for any $\lambda\in\mathbb{R}$, or equivalently
\[
E_{\lambda}( x_{1}^{\ast}x_{1}) =E_{q^{-2}( q^{2}%
+c-\lambda) }( x_{2}^{\ast}x_{2}) .
\]

\begin{proposition}\label{recursive} The orthonormal vectors $v_{k}$, $k\in\mathbb{N}$, are
eigenvectors of $x_{1}^{\ast}x_{1}$ $($and of $x_{2}^{\ast}x_{2}\equiv
1+q^{-2}c-q^{-2}x_{1}^{\ast}x_{1})$, and hence each of $\mathcal{H}_{1}$ and
$\mathcal{H}_{2}$ is invariant under all of the generators $\tilde{x}
_{1}^{\ast}\tilde{x}_{2}$, $x_{1}^{\ast}x_{1}$, and $x_{2}^{\ast}x_{2}$ of
$C\big(\mathbb{C}P_{q,c}^{1}\big) $. More explicitly, $(x_{1}^{\ast}x_{1}) ( v_{k}) =c_{k}v_{k}$ for all
$k\in\mathbb{N}$, where $c_{k}$ is defined recursively by
\[
c_{k+2}=c-q^{-2}c+q^{-2}c_{k}\qquad \text{for }k\in\mathbb{N},\text{\ with\ } c_{2}=1+c\text{\ and }c_{1}=0,
\]
which can be rewritten as
\[
c_{2n}=q^{-2( n-1) }+c\qquad \text{and} \qquad c_{2n+1}=\big(1-q^{-2n}\big) c
\]
for all $n\in\mathbb{Z}_{\geq}$.
\end{proposition}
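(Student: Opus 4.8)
The plan is to establish the eigenvector claim first and then read off the closed forms by solving the resulting scalar recursion. Since $v_{k+2}=(\tilde{x}_{1}^{\ast}\tilde{x}_{2})(v_{k})$, the even- and odd-indexed vectors are exactly the two chains generated from $v_{2}$ and $v_{1}$ by iterating $\tilde{x}_{1}^{\ast}\tilde{x}_{2}$, so I would prove $(x_{1}^{\ast}x_{1})(v_{k})=c_{k}v_{k}$ by an induction that requires two base cases (for $v_{1}$ and $v_{2}$) together with one inductive step advancing $k$ to $k+2$.

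The case $k=1$ is immediate: $v_{1}$ spans $\ker(x_{1})=\ker(x_{1}^{\ast}x_{1})$, so $(x_{1}^{\ast}x_{1})(v_{1})=0=c_{1}v_{1}$. The case $k=2$ is the crux and is where I expect the real work to lie. By construction $\tilde{x}_{1}(v_{2})=w$ for a unit vector $w$ spanning the one-dimensional space $\ker(\tilde{x}_{2}^{\ast})=\ker(x_{2}^{\ast})$ (the two kernels coincide because $\tilde{x}_{2}$ is an isometry with the same range as $x_{2}$). Since $x_{2}^{\ast}w=0$, the identity $x_{1}x_{1}^{\ast}+x_{2}x_{2}^{\ast}=1+c$ forces $(x_{1}x_{1}^{\ast})(w)=(1+c)w$. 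I would then exploit the intertwining relation $\tilde{x}_{1}(x_{1}^{\ast}x_{1})=(x_{1}x_{1}^{\ast})\tilde{x}_{1}$, which holds because $\underline{(x_{1}^{\ast}x_{1})^{-1/2}}$ commutes with $x_{1}^{\ast}x_{1}$ and $x_{1}(x_{1}^{\ast}x_{1})=(x_{1}x_{1}^{\ast})x_{1}$: applying it to $v_{2}$ yields $\tilde{x}_{1}((x_{1}^{\ast}x_{1})v_{2})=(x_{1}x_{1}^{\ast})w=(1+c)w=\tilde{x}_{1}((1+c)v_{2})$, so $(x_{1}^{\ast}x_{1})v_{2}-(1+c)v_{2}\in\ker(\tilde{x}_{1})=\mathbb{C}v_{1}$. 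But $(x_{1}^{\ast}x_{1})v_{2}$ lies in the range of the self-adjoint operator $x_{1}^{\ast}x_{1}$, which is contained in $(\ker x_{1})^{\perp}$, and $(1+c)v_{2}\in(\ker x_{1})^{\perp}$ as well, so the difference lies in $(\ker x_{1})^{\perp}\cap\ker(x_{1})=0$. Hence $(x_{1}^{\ast}x_{1})v_{2}=(1+c)v_{2}$, i.e.\ $c_{2}=1+c$.

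For the inductive step, assuming $(x_{1}^{\ast}x_{1})v_{k}=c_{k}v_{k}$, the relation $E_{\lambda}(x_{1}^{\ast}x_{1})=E_{q^{-2}(q^{2}+c-\lambda)}(x_{2}^{\ast}x_{2})$ recorded just before the proposition places $v_{k}$ in $E_{\mu_{k}}(x_{2}^{\ast}x_{2})$ with $\mu_{k}=1+q^{-2}c-q^{-2}c_{k}$. Proposition~\ref{eigen-intertwine} then sends $v_{k+2}=(\tilde{x}_{1}^{\ast}\tilde{x}_{2})(v_{k})$ into $E_{1+c-\mu_{k}}(x_{1}^{\ast}x_{1})$, and the injectivity of $\tilde{x}_{1}^{\ast}\tilde{x}_{2}$ guarantees $v_{k+2}\neq0$, so it is a genuine eigenvector with eigenvalue $c_{k+2}=1+c-\mu_{k}=c-q^{-2}c+q^{-2}c_{k}$, which is exactly the asserted recursion.

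Finally I would solve the recursion, which decouples by parity: writing $t_{n}$ for either $c_{2n}$ or $c_{2n+1}$, both obey $t_{n+1}=q^{-2}t_{n}+(1-q^{-2})c$ with the single fixed point $c$, so $t_{n}-c=q^{-2}(t_{n-1}-c)$, and iterating from $c_{2}=1+c$ and $c_{1}=0$ gives $c_{2n}=q^{-2(n-1)}+c$ and $c_{2n+1}=(1-q^{-2n})c$. The invariance assertion is then automatic: each $\mathcal{H}_{i}$ is the closed span of eigenvectors of $x_{1}^{\ast}x_{1}$ (equivalently of $x_{2}^{\ast}x_{2}$), so it is invariant under both, while $\tilde{x}_{1}^{\ast}\tilde{x}_{2}$ carries $v_{i+2n}$ to $v_{i+2(n+1)}$ and hence preserves $\mathcal{H}_{i}$ by construction. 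The one genuinely non-formulaic point is the $v_{2}$ base case, since $v_{2}$ is defined only indirectly through the projection $p_{2}$ and a preimage under $\tilde{x}_{1}$; the two ingredients that unlock it are that $w=\tilde{x}_{1}(v_{2})$ is a $(1+c)$-eigenvector of $x_{1}x_{1}^{\ast}$ and that $\tilde{x}_{1}$ intertwines $x_{1}^{\ast}x_{1}$ with $x_{1}x_{1}^{\ast}$.
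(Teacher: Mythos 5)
Your proof is correct and follows essentially the same route as the paper: the same two base cases, the same inductive step via Proposition~\ref{eigen-intertwine} combined with $E_{\lambda}(x_{1}^{\ast}x_{1})=E_{q^{-2}(q^{2}+c-\lambda)}(x_{2}^{\ast}x_{2})$, and the same fixed-point solution $c_{k+2}-c=q^{-2}(c_{k}-c)$ of the recursion. The only cosmetic difference is in the $k=2$ base case, where you push $\tilde{x}_{1}$ forward onto $w$ and finish with a kernel/orthogonality argument, whereas the paper computes $(x_{1}^{\ast}x_{1})(\tilde{x}_{1}^{\ast}(w))$ directly; both hinge on $w\in\ker(x_{2}^{\ast})$ and $x_{1}x_{1}^{\ast}+x_{2}x_{2}^{\ast}=1+c$.
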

\begin{proof} We prove $( x_{1}^{\ast}x_{1}) ( v_{k})=c_{k}v_{k}$ and the formula $c_{k+2}=c-q^{-2}c+q^{-2}c_{k}$ inductively on
$k$.

First $v_{1}\in( \operatorname{range}( \tilde{x}_{1}^{\ast
}) ) ^{\perp}=\ker( \tilde{x}_{1}) $, so
\[
( x_{1}^{\ast}x_{1}) ( v_{1}) =\big( (
x_{1}^{\ast}x_{1}) ^{1/2}x_{1}^{\ast}x_{1}\underline{(
x_{1}^{\ast}x_{1}) ^{-1/2}}\big) ( v_{1}) =\big(
( x_{1}^{\ast}x_{1}) ^{1/2}x_{1}^{\ast}\tilde{x}_{1}\big)( v_{1}) =0.
\]

Next since $v_{2}\in\operatorname{range}( \tilde{x}_{1}^{\ast}(1-\tilde{x}_{2}\tilde{x}_{2}^{\ast}) \tilde{x}_{1}) $, so
$v_{2}=\tilde{x}_{1}^{\ast}(w) $ for some unit vector
\[
w\in\operatorname{range}( 1-\tilde{x}_{2}\tilde{x}_{2}^{\ast})
=\ker( \tilde{x}_{2}\tilde{x}_{2}^{\ast}) =\ker( \tilde
{x}_{2}^{\ast}) =\ker( x_{2}^{\ast})
\]
and hence
\begin{gather*}
( x_{1}^{\ast}x_{1}) ( v_{2}) =( x_{1}^{\ast
}x_{1}) ( \tilde{x}_{1}^{\ast}(w) ) =(
x_{1}^{\ast}x_{1}) \underline{( x_{1}^{\ast}x_{1})
^{-1/2}}x_{1}^{\ast}(w)
\\
\hphantom{( x_{1}^{\ast}x_{1}) ( v_{2})}{}
=\underline{( x_{1}^{\ast}x_{1}) ^{-1/2}}( x_{1}^{\ast
}x_{1}) x_{1}^{\ast}(w) =\underline{( x_{1}^{\ast
}x_{1}) ^{-1/2}}x_{1}^{\ast}( 1+c-x_{2}x_{2}^{\ast})
(w)\\
\hphantom{( x_{1}^{\ast}x_{1}) ( v_{2})}{}
=\tilde{x}_{1}^{\ast}( ( 1+c) w-0) =(
1+c) \tilde{x}_{1}^{\ast}(w) =( 1+c) v_{2}.
\end{gather*}

Now assume that $( x_{1}^{\ast}x_{1}) ( v_{k})
=c_{k}v_{k}$, i.e., $v_{k}\in E_{c_{k}}( x_{1}^{\ast}x_{1}) $, for
$k\in\mathbb{N}$. Then
\begin{gather*}
v_{k+2}\equiv( \tilde{x}_{1}^{\ast}\tilde{x}_{2}) (
v_{k}) \in( \tilde{x}_{1}^{\ast}\tilde{x}_{2}) (
E_{c_{k}}( x_{1}^{\ast}x_{1}) ) \equiv( \tilde
{x}_{1}^{\ast}\tilde{x}_{2}) ( E_{q^{-2}( q^{2}+c-c_{k}) }( x_{2}^{\ast}x_{2}) )
\\
\hphantom{v_{k+2}}{}
\subset E_{1+c-q^{-2}( q^{2}+c-c_{k}) }( x_{1}^{\ast}%
x_{1}) =E_{c-q^{-2}c+q^{-2}c_{k}}( x_{1}^{\ast}x_{1}) ,
\end{gather*}
and hence $( x_{1}^{\ast}x_{1}) ( v_{k+2})
=c_{k+2}v_{k+2}$ for $c_{k+2}:=c-q^{-2}c+q^{-2}c_{k}$.

The recursive formula $c_{k+2}=c-q^{-2}c+q^{-2}c_{k}$ rewritten as
$c_{k+2}-c=q^{-2}( c_{k}-c) $ immediately leads to $c_{i+2n}
-c=q^{-2n}( c_{i}-c) $ and hence
\[
c_{i+2n}=q^{-2n}( c_{i}-c) +c
\]
for any $i\in\{ 1,2\} $ and $n\in\mathbb{N}$. More explicitly, we
have $c_{2n}=q^{-2( n-1) }+c$ and $c_{2n+1}=\big(1-q^{-2n}\big) c$ for all $n\in\mathbb{N}$.
\end{proof}

\begin{corollary}\label{weight_shift} The element $x_{1}^{\ast}x_{2}=( x_{1}^{\ast}x_{1})
^{1/2}\tilde{x}_{1}^{\ast}\tilde{x}_{2}( x_{2}^{\ast}x_{2})
^{1/2}$ is a weighted shift on $\mathcal{H}_{1}$ and $\mathcal{H}_{2}$ with
respect to the orthonormal bases $\{ v_{2n-1}\} _{n\geq1}$ and
$\{ v_{2n}\} _{n\geq1}$ respectively. More precisely,
\[
( x_{1}^{\ast}x_{2}) ( v_{k}) =\sqrt{c_{k+2}}%
\sqrt{1+q^{-2}c-q^{-2}c_{k}}v_{k+2}%
\]
for the constants $c_{k}$ specified in the above proposition.
\end{corollary}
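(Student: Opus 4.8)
The plan is to read off the action of $x_{1}^{\ast}x_{2}$ on each basis vector directly from the factorization
\[
x_{1}^{\ast}x_{2}=( x_{1}^{\ast}x_{1}) ^{1/2}\tilde{x}_{1}^{\ast}\tilde{x}_{2}( x_{2}^{\ast}x_{2}) ^{1/2}
\]
already established in the proof of Theorem~\ref{generators}, evaluating the three factors on $v_{k}$ one at a time from right to left and using Proposition~\ref{recursive} to replace each positive square root by a scalar on the relevant eigenvector.

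First I would apply $( x_{2}^{\ast}x_{2}) ^{1/2}$ to $v_{k}$. By Proposition~\ref{recursive} the vector $v_{k}$ is an eigenvector of $x_{2}^{\ast}x_{2}\equiv1+q^{-2}c-q^{-2}x_{1}^{\ast}x_{1}$ with eigenvalue $1+q^{-2}c-q^{-2}c_{k}$, which is nonnegative (in fact strictly positive, since $\ker( x_{2}) =0$ forces $x_{2}^{\ast}x_{2}$ to be invertible). Hence, by the functional calculus defining the positive square root, $( x_{2}^{\ast}x_{2}) ^{1/2}$ multiplies $v_{k}$ by $\sqrt{1+q^{-2}c-q^{-2}c_{k}}$. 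Next I would apply $\tilde{x}_{1}^{\ast}\tilde{x}_{2}$, which by the very definition of the $v_{k}$ sends $v_{k}$ to $v_{k+2}$. Finally I would apply $( x_{1}^{\ast}x_{1}) ^{1/2}$ to $v_{k+2}$; since $v_{k+2}$ is an eigenvector of $x_{1}^{\ast}x_{1}$ with eigenvalue $c_{k+2}$, again by Proposition~\ref{recursive}, this multiplies $v_{k+2}$ by $\sqrt{c_{k+2}}$. Composing the three scalars yields
\[
( x_{1}^{\ast}x_{2}) ( v_{k}) =\sqrt{c_{k+2}}\sqrt{1+q^{-2}c-q^{-2}c_{k}}\,v_{k+2},
\]
which is exactly the asserted formula.

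To obtain the weighted-shift interpretation, I would note that $\{ v_{2n-1}\} _{n\geq1}$ and $\{ v_{2n}\} _{n\geq1}$ are orthonormal bases of $\mathcal{H}_{1}$ and $\mathcal{H}_{2}$ respectively, and that the displayed formula sends each such basis vector to a scalar multiple of the next one within the same parity class, $v_{2n-1}\mapsto v_{2n+1}$ and $v_{2n}\mapsto v_{2n+2}$. Thus $x_{1}^{\ast}x_{2}$ leaves each $\mathcal{H}_{i}$ invariant and restricts there to a weighted shift with the stated weights. Since $c_{k+2}>0$ and $1+q^{-2}c-q^{-2}c_{k}>0$ (as noted above), every weight is strictly positive, so these are genuine injective weighted shifts.

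There is no serious obstacle here: the result is a direct computation once the factorization of $x_{1}^{\ast}x_{2}$ and the eigenvector data of Proposition~\ref{recursive} are in hand. The only point demanding care is the replacement of the positive operators $( x_{1}^{\ast}x_{1}) ^{1/2}$ and $( x_{2}^{\ast}x_{2}) ^{1/2}$ by scalars on the relevant eigenvectors, which is legitimate precisely because the corresponding eigenvalues $c_{k+2}$ and $1+q^{-2}c-q^{-2}c_{k}$ are nonnegative, so that the functional calculus gives $\sqrt{\mu}\,v$ for any eigenvector $v$ with eigenvalue $\mu\geq0$.
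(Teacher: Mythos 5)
Your proof is correct and follows essentially the same route as the paper: the paper's own proof simply cites the eigenvalue relations $(x_{1}^{\ast}x_{1})(v_{k})=c_{k}v_{k}$ and $(x_{2}^{\ast}x_{2})(v_{k})=\big(1+q^{-2}c-q^{-2}c_{k}\big)v_{k}$ and leaves the right-to-left evaluation of the factorization $x_{1}^{\ast}x_{2}=(x_{1}^{\ast}x_{1})^{1/2}\tilde{x}_{1}^{\ast}\tilde{x}_{2}(x_{2}^{\ast}x_{2})^{1/2}$ implicit, which is exactly the computation you spell out. Your additional remarks on the strict positivity of the weights are accurate but not needed for the statement.
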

\begin{proof} This is a simple consequence of $( x_{1}^{\ast}x_{1})
( v_{k}) =c_{k}v_{k}$ and
\[
( x_{2}^{\ast}x_{2}) ( v_{k}) \equiv\big(
1+q^{-2}c-q^{-2}x_{1}^{\ast}x_{1}\big) ( v_{k}) =\big(
1+q^{-2}c-q^{-2}c_{k}\big) v_{k} .\tag*{\qed}
\]\renewcommand{\qed}{}
\end{proof}

With each of $\mathcal{H}_{1}$ and $\mathcal{H}_{2}$ invariant under the
self-adjoint operators $x_{i}^{\ast}x_{i}$, it is clear that the orthogonal
complement $\mathcal{H}_{0}:=( \mathcal{H}_{1}\oplus\mathcal{H}%
_{2}) ^{\perp}$ in $\ell^{2}( \mathbb{Z}_{\geq}) $ is also
invariant under each $x_{i}^{\ast}x_{i}$. On the other hand, since we know the
orthonormal vectors $v_{1},v_{2}\in( \operatorname{range}(
\tilde{x}_{1}^{\ast}\tilde{x}_{2}) ) ^{\perp}$ for the
index-$( -2) $ isometry $\tilde{x}_{1}^{\ast}\tilde{x}_{2}$, we
get a Wold-von Neumann decomposition (Theorem 3.5.17 of \cite{Mu}) for the
isometry $\tilde{x}_{1}^{\ast}\tilde{x}_{2}$ as
\[
\tilde{x}_{1}^{\ast}\tilde{x}_{2}=\tilde{x}_{1}^{\ast}\tilde{x}_{2}%
|_{\mathcal{H}_{0}}\oplus\tilde{x}_{1}^{\ast}\tilde{x}_{2}|_{\mathcal{H}_{1}%
}\oplus\tilde{x}_{1}^{\ast}\tilde{x}_{2}|_{\mathcal{H}_{2}}%
\]
with
\[
\mathcal{H}_{0}\equiv\big( \operatorname{Span}\big( \big\{ (
\tilde{x}_{1}^{\ast}\tilde{x}_{2}) ^{k}( v_{i})
\colon i\in\{ 1,2\} \text{ and }k\in\mathbb{Z}_{\geq}\big\} \big)
\big) ^{\perp} .
\]
Here $( \tilde{x}_{1}^{\ast}\tilde{x}_{2}) |_{\mathcal{H}_{0}}$
is a unitary operator on $\mathcal{H}_{0}$ (if $\mathcal{H}_{0}\neq0$) since
$\tilde{x}_{1}^{\ast}\tilde{x}_{2}|_{\mathcal{H}_{0}}$ is an index-$0$
isometry in view of $\tilde{x}_{1}^{\ast}\tilde{x}_{2}|_{\mathcal{H}_{i}}$
being an index-$( -1) $ isometry for each $i\in\{
1,2\} $.

It is not clear whether $\mathcal{H}_{0}$ is actually trivial or not, so we
remark that any discussion involving $\mathcal{H}_{0}$ below is only needed
and valid when $\mathcal{H}_{0}\neq0$.

We already know that $( \tilde{x}_{1}^{\ast}\tilde{x}_{2})
|_{\mathcal{H}_{i}}$ is a unilateral shift for each $i\in\{ 1,2\}
$. So with respect to the decomposition
\[
\ell^{2}( \mathbb{Z}_{\geq}) =\mathcal{H}_{0}\oplus
\mathcal{H}_{1}\oplus\mathcal{H}_{2}%
\]
into orthogonal subspaces, the generators $\tilde{x}_{1}^{\ast}\tilde{x}_{2}$,
$x_{1}^{\ast}x_{1}$, $x_{2}^{\ast}x_{2}$ and hence all elements of $C\big(
\mathbb{C}P_{q,c}^{1}\big) $ can be viewed as block diagonal operators.
Then it is easy to see that Propositions~\ref{intertwine} and~\ref{eigen-intertwine} hold for the
restrictions of $\tilde{x}_{1}^{\ast}\tilde{x}_{2}$, $x_{1}^{\ast}x_{1}$,
$x_{2}^{\ast}x_{2}$ to each $\mathcal{H}_{i}$.

\begin{lemma}\label{invariant-spectrum} The spectrum $\operatorname{Sp}( x_{1}^{\ast}%
x_{1}|_{\mathcal{H}_{0}}) $ of $x_{1}^{\ast}x_{1}|_{\mathcal{H}_{0}}$
is invariant under the function
\[
f_{1}\colon \ s\mapsto c-q^{-2}c+q^{-2}s\equiv c+q^{-2}(s-c)
\]
and its inverse function. Similarly, the spectrum $\operatorname{Sp}(
x_{2}^{\ast}x_{2}|_{\mathcal{H}_{0}}) $ of $x_{2}^{\ast}x_{2}%
|_{\mathcal{H}_{0}}$ is invariant under the function%
\[
f_{2}\colon \ s\mapsto1-q^{-2}+q^{-2}s\equiv1+q^{-2}(s-1)
\]
and its inverse function.
\end{lemma}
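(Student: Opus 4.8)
The plan is to turn the intertwining relation of Proposition~\ref{intertwine} into a statement about unitary equivalence, which is available precisely because we have passed to $\mathcal{H}_0$. Write $U:=(\tilde{x}_1^{\ast}\tilde{x}_2)|_{\mathcal{H}_0}$, which is unitary as recorded above, and set $A:=x_1^{\ast}x_1|_{\mathcal{H}_0}$ and $C:=x_2^{\ast}x_2|_{\mathcal{H}_0}$. Restricting Proposition~\ref{intertwine} to the invariant subspace $\mathcal{H}_0$ gives $A\,U=U\,(1+c-C)$. The point is that $U$ is now invertible, so this reads $A=U(1+c-C)U^{\ast}$; on $\mathcal{H}_1$ or $\mathcal{H}_2$ the corresponding operator is only an isometry (a unilateral shift) and no such conjugation would be available, which is exactly why the lemma is stated for $\mathcal{H}_0$.

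Next I would feed in the scalar relation $x_1^{\ast}x_1+q^2x_2^{\ast}x_2=q^2+c$ established earlier, i.e.\ $C=1+q^{-2}c-q^{-2}A$, to eliminate $C$ in favour of an affine function of $A$. A one-line computation gives $1+c-C=c-q^{-2}c+q^{-2}A=f_1(A)$, so the displayed relation becomes $A=U\,f_1(A)\,U^{\ast}$. Since spectrum is invariant under unitary conjugation and $\operatorname{Sp}(f_1(A))=f_1(\operatorname{Sp}(A))$ by the spectral mapping theorem for the continuous (indeed affine) function $f_1$ applied to the self-adjoint operator $A$, I conclude $\operatorname{Sp}(A)=f_1(\operatorname{Sp}(A))$. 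Because $f_1$ is an affine bijection of $\mathbb{R}$, invariance under $f_1$ is the same as invariance under $f_1^{-1}$, which finishes the first assertion.

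For $x_2^{\ast}x_2$ I would run the identical argument with the roles reversed. Conjugating $C$ by $U$ and using both $A=U(1+c-C)U^{\ast}$ and $A=q^2+c-q^2C$ yields $U\,C\,U^{\ast}=1-q^2+q^2C=f_2^{-1}(C)$, so unitary invariance of the spectrum together with the spectral mapping theorem give $\operatorname{Sp}(C)=f_2^{-1}(\operatorname{Sp}(C))$, hence invariance under $f_2$ and its inverse. (Alternatively one verifies the conjugacy $f_2\circ g=g\circ f_1$ for the affine change of variables $g(s)=1+q^{-2}c-q^{-2}s$ relating $\operatorname{Sp}(A)$ and $\operatorname{Sp}(C)$, and transports the first invariance through $g$.) I do not expect a genuine obstacle here; the only thing to be careful about is that the whole argument hinges on $U$ being unitary on $\mathcal{H}_0$, so every step must stay on $\mathcal{H}_0$ and the ambient (merely isometric) operator $\tilde{x}_1^{\ast}\tilde{x}_2$ must not be used in place of $U$.
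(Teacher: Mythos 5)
Your proposal is correct and follows essentially the same route as the paper: both arguments restrict the intertwining relation of Proposition~\ref{intertwine} to $\mathcal{H}_{0}$, use the unitarity of $\tilde{x}_{1}^{\ast}\tilde{x}_{2}|_{\mathcal{H}_{0}}$ to obtain the unitary equivalence $x_{1}^{\ast}x_{1}|_{\mathcal{H}_{0}}\sim 1+c-x_{2}^{\ast}x_{2}|_{\mathcal{H}_{0}}$, and then eliminate one operator via $x_{1}^{\ast}x_{1}+q^{2}x_{2}^{\ast}x_{2}=q^{2}+c$ to conclude $\operatorname{Sp}(x_{1}^{\ast}x_{1}|_{\mathcal{H}_{0}})=f_{1}(\operatorname{Sp}(x_{1}^{\ast}x_{1}|_{\mathcal{H}_{0}}))$. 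Your treatment of the second assertion via the direct computation $UCU^{\ast}=f_{2}^{-1}(C)$ is just a repackaging of the paper's conjugation $g^{-1}\circ f_{1}\circ g$, so there is no substantive difference.
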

\begin{proof} By Proposition \ref{intertwine},
\[
( x_{1}^{\ast}x_{1}|_{\mathcal{H}_{0}}) ( \tilde{x}%
_{1}^{\ast}\tilde{x}_{2}|_{\mathcal{H}_{0}}) =( \tilde{x}%
_{1}^{\ast}\tilde{x}_{2}|_{\mathcal{H}_{0}}) ( 1+c-x_{2}^{\ast
}x_{2}|_{\mathcal{H}_{0}})
\]
with $\tilde{x}_{1}^{\ast}\tilde{x}_{2}|_{\mathcal{H}_{0}}$ unitary, we get
$x_{1}^{\ast}x_{1}|_{\mathcal{H}_{0}}$ and $1+c-x_{2}^{\ast}x_{2}%
|_{\mathcal{H}_{0}}$ unitarily equivalent and hence
\[
\operatorname{Sp}( x_{1}^{\ast}x_{1}|_{\mathcal{H}_{0}})
=\operatorname{Sp}( 1+c-x_{2}^{\ast}x_{2}|_{\mathcal{H}_{0}})
=1+c-\operatorname{Sp}( x_{2}^{\ast}x_{2}|_{\mathcal{H}_{0}}) .
\]

On the other hand, from $x_{1}^{\ast}x_{1}+q^{2}x_{2}^{\ast}x_{2}=q^{2}+c$, we
have
\[
\operatorname{Sp}( x_{2}^{\ast}x_{2}|_{\mathcal{H}_{0}})
=q^{-2}\big( q^{2}+c-\operatorname{Sp}( x_{1}^{\ast}x_{1}
|_{\mathcal{H}_{0}}) \big) =1+q^{-2}c-q^{-2}\operatorname{Sp}
( x_{1}^{\ast}x_{1}|_{\mathcal{H}_{0}}) .
\]
Hence
\[
\operatorname{Sp}( x_{1}^{\ast}x_{1}|_{\mathcal{H}_{0}})
=1+c-\big( 1+q^{-2}c-q^{-2}\operatorname{Sp}( x_{1}^{\ast}
x_{1}|_{\mathcal{H}_{0}}) \big) =c-q^{-2}c+q^{-2}\operatorname{Sp}
( x_{1}^{\ast}x_{1}|_{\mathcal{H}_{0}}) ,
\]
which shows that under the invertible function $f_{1}\colon s\in\mathbb{R}\mapsto
c-q^{-2}c+q^{-2}s\in\mathbb{R}$, the set $\operatorname{Sp}(
x_{1}^{\ast}x_{1}|_{\mathcal{H}_{0}}) \subset\mathbb{R}$ equals itself
and hence the inverse function $( f_{1}) ^{-1}$ maps
$\operatorname{Sp}( x_{1}^{\ast}x_{1}|_{\mathcal{H}_{0}}) $ onto
itself too.

Since the invertible function $g\colon s\in\mathbb{R}\mapsto1+c-s\in\mathbb{R}$ maps
$\operatorname{Sp}( x_{2}^{\ast}x_{2}|_{\mathcal{H}_{0}}) $ onto
$\operatorname{Sp}( x_{1}^{\ast}x_{1}|_{\mathcal{H}_{0}}) $, the
conjugate $g^{-1}\circ f_{1}\circ g$ and its inverse function map
$\operatorname{Sp}( x_{2}^{\ast}x_{2}|_{\mathcal{H}_{0}}) $ onto
itself, where
\begin{gather*}
\big( g^{-1}\circ f_{1}\circ g\big) (s) =1+c-f_{1}(1+c-s)\\
\hphantom{\big( g^{-1}\circ f_{1}\circ g\big) (s)}{}
=1+c-\big( c-q^{-2}c+q^{-2} ( 1+c-s ) \big) =1-q^{-2}%
+q^{-2}s.\tag*{\qed}
\end{gather*}\renewcommand{\qed}{}
\end{proof}

Note that $f_{1}(s) -c=q^{-2}(s-c) $ and
$f_{2}(s) -1=q^{-2}(s-1) $ for all $s\in
\mathbb{R}$ with $q>1$. So the only bounded backward $f_{1}$-orbit is the
constant $f_{1}$-orbit $\{c\} $, and similarly the only bounded
backward $f_{2}$-orbit is the constant $f_{2}$-orbit $\{1\} $,
where by a backward $f_{i}$-orbit, we mean the set $ \{ (
f_{i} ) ^{-n}(s) \colon n\in\mathbb{N} \} $ for a point
$s\in\mathbb{R}$. On the other hand, any forward $f_{i}$-orbit converges to
the constant $f_{i}$-orbit, i.e., $\lim\limits_{n\rightarrow\infty}(
f_{1}) ^{n}(s) =c$ and $\lim\limits_{n\rightarrow\infty}(
f_{2}) ^{n}(s) =1$ for any $s$. Since each spectrum $\operatorname{Sp}( x_{i}^{\ast}%
x_{i}|_{\mathcal{H}_{0}}) $ is a compact and hence bounded set that is
invariant under backward iterations of~$f_{i}$, it can contain only the
constant $f_{i}$-orbit. We have, therefore

\begin{corollary}\label{H0spectrum} $\operatorname{Sp}( x_{1}^{\ast}x_{1}%
|_{\mathcal{H}_{0}}) =\{c\} $ and $\operatorname{Sp}
( x_{2}^{\ast}x_{2}|_{\mathcal{H}_{0}}) =\{1\} $,
i.e., $x_{1}^{\ast}x_{1}|_{\mathcal{H}_{0}}=c\, \mathrm{id}$ and $x_{2}^{\ast
}x_{2}|_{\mathcal{H}_{0}}=\mathrm{id}$.
\end{corollary}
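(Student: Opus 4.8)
The plan is to exploit the dynamical behaviour of the affine maps $f_1$ and $f_2$ together with the compactness of the spectra, essentially formalizing the orbit discussion recorded just above the corollary. First I would recall from Lemma~\ref{invariant-spectrum} that $\operatorname{Sp}( x_1^{\ast}x_1|_{\mathcal{H}_0}) $ is invariant under both $f_1$ and its inverse, where $f_1(s) =c+q^{-2}(s-c) $, so that $( f_1) ^{-1}(s) =c+q^2(s-c) $.

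The key step is an argument by contradiction. Suppose some $s_0\in\operatorname{Sp}( x_1^{\ast}x_1|_{\mathcal{H}_0}) $ satisfies $s_0\neq c$. Since the spectrum is invariant under $( f_1) ^{-1}$, the entire backward orbit
\[
( f_1) ^{-n}(s_0) =c+q^{2n}(s_0-c) ,\qquad n\in\mathbb{N},
\]
lies in $\operatorname{Sp}( x_1^{\ast}x_1|_{\mathcal{H}_0}) $. But with $q>1$ we have $\big| q^{2n}(s_0-c) \big| \rightarrow\infty$, contradicting the fact that the spectrum of a bounded operator is a compact, hence bounded, subset of $\mathbb{R}$. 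Therefore $\operatorname{Sp}( x_1^{\ast}x_1|_{\mathcal{H}_0}) =\{ c\} $.

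Having pinned the spectrum down to a single point, I would finish with the spectral theorem: a bounded self-adjoint operator whose spectrum is the singleton $\{ c\} $ must equal $c\, \mathrm{id}$, giving $x_1^{\ast}x_1|_{\mathcal{H}_0}=c\, \mathrm{id}$. The statement for $x_2^{\ast}x_2$ then follows immediately from the relation $x_1^{\ast}x_1+q^2x_2^{\ast}x_2=q^2+c$ established earlier, which yields $x_2^{\ast}x_2|_{\mathcal{H}_0}=q^{-2}( q^2+c-c) \, \mathrm{id}=\mathrm{id}$; alternatively, one repeats the same backward-orbit argument verbatim with $f_2$ and its fixed point $1$.

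I expect no serious obstacle here, since the substantive work---the spectral invariance and the contraction/expansion dynamics of $f_i$---is already carried out in Lemma~\ref{invariant-spectrum} and the remark preceding the corollary. The only point requiring a little care is the passage from a singleton spectrum to a scalar operator, which is precisely where the self-adjointness (equivalently, positivity) of $x_i^{\ast}x_i|_{\mathcal{H}_0}$ is genuinely used.
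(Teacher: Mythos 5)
Your proposal is correct and follows essentially the same route as the paper: the paper's own justification is precisely that the only bounded backward $f_i$-orbit is the fixed point, so a compact spectrum invariant under $(f_i)^{-1}$ must be that singleton, and your explicit computation $(f_1)^{-n}(s_0)=c+q^{2n}(s_0-c)\to\infty$ just formalizes this. The concluding steps (singleton spectrum of a self-adjoint operator forces a scalar, and the relation $x_1^{\ast}x_1+q^2x_2^{\ast}x_2=q^2+c$ transfers the result to $x_2^{\ast}x_2$) match the paper's intent as well.
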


\begin{proposition}\label{iso} There is a unital $C^*$-algebra isomorphism from
$C^{\ast}( \{ \tilde{x}_{1}^{\ast}\tilde{x}_{2},x_{1}^{\ast}%
x_{1} \} ) |_{\mathcal{H}_{1}\oplus\mathcal{H}_{2}}$ to the
pullback $\mathcal{T}\oplus_{C(\mathbb{T}) }\mathcal{T}$ of two
copies of $\mathcal{T}\overset{\sigma}{\rightarrow}C(\mathbb{T})
$, sending $\tilde{x}_{1}^{\ast}\tilde{x}_{2}|_{\mathcal{H}_{1}\oplus
\mathcal{H}_{2}}$ to $\mathcal{S}\oplus\mathcal{S}$. This isomorphism provides
an exact sequence%
\[
0\rightarrow\mathcal{K}( \mathcal{H}_{1}) \oplus\mathcal{K}%
( \mathcal{H}_{2}) \rightarrow C^{\ast}(\{ \tilde
{x}_{1}^{\ast}\tilde{x}_{2},x_{1}^{\ast}x_{1}\} ) |_{\mathcal{H}%
_{1}\oplus\mathcal{H}_{2}}\cong\mathcal{T}\oplus_{C(\mathbb{T})
}\mathcal{T}\overset{\sigma}{\rightarrow}C(\mathbb{T})
\rightarrow0
\]
of $C^*$-algebras with $\sigma( \tilde{x}_{1}^{\ast}\tilde{x}%
_{2}|_{\mathcal{H}_{1}\oplus\mathcal{H}_{2}}) =\mathrm{id}_{\mathbb{T}%
}$, $\sigma( x_{1}^{\ast}x_{1}|_{\mathcal{H}_{1}\oplus\mathcal{H}_{2}%
}) =c$, and $\sigma( x_{2}^{\ast}x_{2}|_{\mathcal{H}_{1}%
\oplus\mathcal{H}_{2}}) =1$.
\end{proposition}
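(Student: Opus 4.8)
The plan is to realize the desired isomorphism as the block-decomposition map $\Phi\colon S \mapsto (S|_{\mathcal{H}_1}, S|_{\mathcal{H}_2})$ and to identify its image with the pullback by exploiting the already-established containment of the non-synchronized compacts. First I would observe that, by Proposition~\ref{recursive} together with the Wold--von Neumann decomposition recorded above, each $\mathcal{H}_i$ is a reducing subspace for both generators $\tilde{x}_1^\ast\tilde{x}_2$ and $x_1^\ast x_1$; indeed the Wold decomposition exhibits $\tilde{x}_1^\ast\tilde{x}_2$ as a direct sum over the $\mathcal{H}_i$, so $\mathcal{H}_i$ is invariant under $(\tilde{x}_1^\ast\tilde{x}_2)^\ast$ as well. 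Hence every element of $A:=C^\ast(\{\tilde{x}_1^\ast\tilde{x}_2, x_1^\ast x_1\})|_{\mathcal{H}_1\oplus\mathcal{H}_2}$ is block diagonal, and each restriction $S\mapsto S|_{\mathcal{H}_i}$ is a $\ast$-homomorphism. Identifying $\mathcal{H}_i\cong\ell^2(\mathbb{Z}_{\geq})$ via $v_{i+2n}\leftrightarrow e_n$, the generator $\tilde{x}_1^\ast\tilde{x}_2$ restricts to the unilateral shift $\mathcal{S}$, while by Proposition~\ref{recursive} the generator $x_1^\ast x_1$ restricts to the diagonal operator with eigenvalues $c_{i+2n}\to c$, which equals $c\cdot\mathrm{id}$ plus a compact operator and hence lies in $\mathcal{T}$. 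Thus $\Phi(S)=(S|_{\mathcal{H}_1}, S|_{\mathcal{H}_2})$ maps $A$ into $\mathcal{T}\oplus\mathcal{T}$.

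Next I would check that $\Phi$ actually lands in the pullback. Since $\sigma(\mathcal{S})=\mathrm{id}_{\mathbb{T}}$ on both copies and $\sigma$ of the diagonal $x_1^\ast x_1|_{\mathcal{H}_i}$ is the constant $c$ on both copies, the two symbols $\sigma(S|_{\mathcal{H}_1})$ and $\sigma(S|_{\mathcal{H}_2})$ agree on the generators, hence on all of $A$, so $\Phi(A)\subseteq\mathcal{T}\oplus_{C(\mathbb{T})}\mathcal{T}=:P$. Injectivity of $\Phi$ is immediate, since a block-diagonal operator with both blocks zero is zero, and an injective $\ast$-homomorphism of $C^\ast$-algebras is automatically isometric with closed range.

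The main point, and the place where the earlier analysis is indispensable, is surjectivity onto the full pullback rather than some proper subalgebra. Here I would invoke the short exact sequence $0\to\mathcal{K}\oplus\mathcal{K}\to P\overset{\sigma}{\rightarrow}C(\mathbb{T})\to0$, where $\sigma(a,b):=\sigma(a)=\sigma(b)$ has kernel exactly $\mathcal{K}\oplus\mathcal{K}$. On one hand, using the remark that $\tilde{x}_1^\ast\tilde{x}_1\in C^\ast(\{x_1^\ast x_1\})\subseteq A$, the containment $\mathcal{K}(\mathcal{H}_1)\oplus\mathcal{K}(\mathcal{H}_2)\subset C^\ast(\{\tilde{x}_1^\ast\tilde{x}_2,\tilde{x}_1^\ast\tilde{x}_1\})|_{\mathcal{H}_1\oplus\mathcal{H}_2}$ established above shows $\Phi(A)\supseteq\mathcal{K}\oplus\mathcal{K}=\ker\sigma$. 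On the other hand, $\sigma(\Phi(A))$ contains $\sigma(\mathcal{S},\mathcal{S})=\mathrm{id}_{\mathbb{T}}$, which generates $C(\mathbb{T})$, so $\sigma(\Phi(A))=C(\mathbb{T})$. Since $\Phi(A)$ contains the kernel and surjects onto the quotient, the standard argument (for any $p\in P$ pick $b\in\Phi(A)$ with $\sigma(p-b)=0$, so that $p-b\in\ker\sigma\subseteq\Phi(A)$) forces $\Phi(A)=P$. Thus $\Phi$ is a $\ast$-isomorphism sending $\tilde{x}_1^\ast\tilde{x}_2|_{\mathcal{H}_1\oplus\mathcal{H}_2}$ to $\mathcal{S}\oplus\mathcal{S}$.

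Finally, the asserted symbol values are read off directly: $\sigma(\tilde{x}_1^\ast\tilde{x}_2|_{\mathcal{H}_1\oplus\mathcal{H}_2})=\mathrm{id}_{\mathbb{T}}$ and $\sigma(x_1^\ast x_1|_{\mathcal{H}_1\oplus\mathcal{H}_2})=c$ as just noted, while $\sigma(x_2^\ast x_2|_{\mathcal{H}_1\oplus\mathcal{H}_2})=1$ follows from $x_2^\ast x_2=1+cq^{-2}-q^{-2}x_1^\ast x_1$, whose restriction to each $\mathcal{H}_i$ differs from the constant $1+cq^{-2}-q^{-2}c=1$ by a compact operator.
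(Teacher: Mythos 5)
Your proposal is correct and follows essentially the same route as the paper: identify $\tilde{x}_1^{\ast}\tilde{x}_2|_{\mathcal{H}_i}$ with the unilateral shift, use Proposition~\ref{recursive} to see that $x_1^{\ast}x_1|_{\mathcal{H}_i}$ is $c$ plus a compact diagonal operator, and combine the previously established containment $\mathcal{K}(\mathcal{H}_1)\oplus\mathcal{K}(\mathcal{H}_2)\subset C^{\ast}(\{\tilde{x}_1^{\ast}\tilde{x}_2,x_1^{\ast}x_1\})|_{\mathcal{H}_1\oplus\mathcal{H}_2}$ with the fact that the synchronized copy of $\mathcal{T}$ exhausts the symbol quotient. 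Your ``kernel plus quotient'' surjectivity argument is just a repackaging of the paper's explicit decomposition $S\oplus T=(T\oplus T)+((S-T)\oplus 0)$, so there is no substantive difference.
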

\begin{proof} We note that the eigenvalues $c_{k}$ of $x_{1}^{\ast}x_{1}%
|_{\mathcal{H}_{1}\oplus\mathcal{H}_{2}}$ satisfying $c_{k+2}=c-q^{-2}%
c+q^{-2}c_{k}=f_{1}( c_{k}) $ form two forward $f_{1}$-orbits and
hence $\lim\limits_{k\rightarrow\infty}c_{k}=c$. This limit is also clear from the
explicit formulae of $c_{2n}$ and $c_{2n+1}$ given in Proposition~\ref{recursive}.
Similarly, one can verify that the eigenvalues~$c_{k}^{\prime}$ of
$x_{2}^{\ast}x_{2}|_{\mathcal{H}_{1}\oplus\mathcal{H}_{2}}$ form two forward
$f_{2}$-orbit and hence $\lim\limits_{k\rightarrow\infty}c_{k}^{\prime}=1$.

So $x_{1}^{\ast}x_{1}|_{\mathcal{H}_{1}\oplus\mathcal{H}_{2}}\equiv c\oplus c$
mod $\mathcal{K}( \mathcal{H}_{1}) \oplus\mathcal{K}(
\mathcal{H}_{2}) $ and $x_{2}^{\ast}x_{2}|_{\mathcal{H}_{1}%
\oplus\mathcal{H}_{2}}\equiv1\oplus1$ mod $\mathcal{K}( \mathcal{H}%
_{1}) \oplus\mathcal{K}( \mathcal{H}_{2}) $, while
$\tilde{x}_{1}^{\ast}\tilde{x}_{2}|_{\mathcal{H}_{1}\oplus\mathcal{H}_{2}%
}=\mathcal{S}_{\mathcal{H}_{1}}\oplus\mathcal{S}_{\mathcal{H}_{2}}$ for copies
$\mathcal{S}_{\mathcal{H}_{i}}$ of the unilateral shift.

It has been shown earlier that $\mathcal{K}( \mathcal{H}_{1})
\oplus\mathcal{K}( \mathcal{H}_{2}) \subset C^{\ast}(
\{ \tilde{x}_{1}^{\ast}\tilde{x}_{2},x_{1}^{\ast}x_{1}\} )
|_{\mathcal{H}_{1}\oplus\mathcal{H}_{2}}$, so it is not hard to see that
$C^{\ast}( \{ \tilde{x}_{1}^{\ast}\tilde{x}_{2},x_{1}^{\ast}%
x_{1} \} ) |_{\mathcal{H}_{1}\oplus\mathcal{H}_{2}}$ is the
pullback of two copies of $\mathcal{T}\overset{\sigma}{\rightarrow}C(
\mathbb{T}) $. In fact, $\mathcal{S}_{\mathcal{H}_{1}}\oplus
\mathcal{S}_{\mathcal{H}_{2}}\equiv\tilde{x}_{1}^{\ast}\tilde{x}%
_{2}|_{\mathcal{H}_{1}\oplus\mathcal{H}_{2}}$ generates $ \{ T\oplus
T\colon T\in\mathcal{T} \} $ as a $C^*$-subalgebra of $C^{\ast}( \{
\tilde{x}_{1}^{\ast}\tilde{x}_{2},x_{1}^{\ast}x_{1} \} )
|_{\mathcal{H}_{1}\oplus\mathcal{H}_{2}}$ and hence%
\[
\tilde{x}_{1}^{\ast}\tilde{x}_{2}|_{\mathcal{H}_{1}\oplus\mathcal{H}_{2}}%
\in \{ T\oplus T\colon T\in\mathcal{T} \} +( \mathcal{K}(
\mathcal{H}_{1}) \oplus\mathcal{K}( \mathcal{H}_{2})
) \subset C^{\ast}( \{ \tilde{x}_{1}^{\ast}\tilde{x}%
_{2},x_{1}^{\ast}x_{1} \} ) |_{\mathcal{H}_{1}\oplus
\mathcal{H}_{2}}.
\]
On the other hand,
\[
x_{1}^{\ast}x_{1}|_{\mathcal{H}_{1}\oplus\mathcal{H}_{2}}\in( c\oplus
c) +( \mathcal{K}( \mathcal{H}_{1}) \oplus
\mathcal{K}( \mathcal{H}_{2}) ) \subset \{ T\oplus
T\colon T\in\mathcal{T} \} +( \mathcal{K}( \mathcal{H}_{1})
\oplus\mathcal{K}( \mathcal{H}_{2}) )
\]
and hence
\[
C^{\ast}( \{ \tilde{x}_{1}^{\ast}\tilde{x}_{2},x_{1}^{\ast}
x_{1} \} ) |_{\mathcal{H}_{1}\oplus\mathcal{H}_{2}}
\subset \{ T\oplus T\colon T\in\mathcal{T} \} +( \mathcal{K}(
\mathcal{H}_{1}) \oplus\mathcal{K}( \mathcal{H}_{2})) .
\]
So we get
\[
C^{\ast}( \{ \tilde{x}_{1}^{\ast}\tilde{x}_{2},x_{1}^{\ast}%
x_{1} \} ) |_{\mathcal{H}_{1}\oplus\mathcal{H}_{2}}= \{
T\oplus T\colon T\in\mathcal{T} \} +( \mathcal{K}( \mathcal{H}%
_{1}) \oplus\mathcal{K}( \mathcal{H}_{2}) )
=\mathcal{T}\oplus_{C(\mathbb{T}) }\mathcal{T},
\]
where the second equality is due to that any $S\oplus T\in\mathcal{T}%
\oplus\mathcal{T}$ with $\sigma( S) =\sigma(T) $
can be written as
\[
S\oplus T=( T\oplus T) +( ( S-T)
\oplus0) \in T\oplus T+( \mathcal{K}( \mathcal{H}
_{1}) \oplus\mathcal{K}( \mathcal{H}_{2}) ) .
\]

Replacing $\mathcal{T}\oplus_{C(\mathbb{T}) }\mathcal{T}$ in the
canonical exact sequence
\[
0\rightarrow\mathcal{K}( \mathcal{H}_{1}) \oplus\mathcal{K}%
( \mathcal{H}_{2}) \rightarrow\mathcal{T}\oplus_{C(
\mathbb{T}) }\mathcal{T}\overset{\sigma}{\rightarrow}C(
\mathbb{T}) \rightarrow0
\]
by the isomorphic $C^*$-algebra $C^{\ast}( \{ \tilde{x}_{1}^{\ast
}\tilde{x}_{2},x_{1}^{\ast}x_{1} \} ) |_{\mathcal{H}_{1}%
\oplus\mathcal{H}_{2}}$, we get the stated exact sequence with $\sigma(
\tilde{x}_{1}^{\ast}\tilde{x}_{2}|_{\mathcal{H}_{1}\oplus\mathcal{H}_{2}%
}) =\mathrm{id}_{\mathbb{T}}$, $\sigma( x_{1}^{\ast}%
x_{1}|_{\mathcal{H}_{1}\oplus\mathcal{H}_{2}}) =c$, and $\sigma(
x_{2}^{\ast}x_{2}|_{\mathcal{H}_{1}\oplus\mathcal{H}_{2}}) =1$.
\end{proof}

\begin{theorem}\label{restriction} The restriction map
\[
T\in C^{\ast}( \{ \tilde{x}_{1}^{\ast}\tilde{x}_{2},x_{1}^{\ast
}x_{1} \} ) \mapsto T|_{\mathcal{H}_{1}\oplus\mathcal{H}_{2}}\in
C^{\ast}( \{ \tilde{x}_{1}^{\ast}\tilde{x}_{2},x_{1}^{\ast}%
x_{1} \} ) |_{\mathcal{H}_{1}\oplus\mathcal{H}_{2}}%
\]
is a $C^*$-algebra isomorphism, and hence $C^{\ast}( \{ \tilde{x}%
_{1}^{\ast}\tilde{x}_{2},x_{1}^{\ast}x_{1} \} ) $ is isomorphic
to the pullback $\mathcal{T}\oplus_{C(\mathbb{T}) }\mathcal{T}$
of two copies of $\mathcal{T}\overset{\sigma}{\rightarrow}C(
\mathbb{T}) $ with $\tilde{x}_{1}^{\ast}\tilde{x}_{2}$ corresponding to
$\mathcal{S}\oplus\mathcal{S}$.
\end{theorem}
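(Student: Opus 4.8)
The map in question, call it $\rho_{12}$, is by construction the restriction of the faithful representation of $A:=C^{\ast}(\{\tilde{x}_{1}^{\ast}\tilde{x}_{2},x_{1}^{\ast}x_{1}\})$ on $\ell^{2}(\mathbb{Z}_{\geq})=\mathcal{H}_{0}\oplus\mathcal{H}_{1}\oplus\mathcal{H}_{2}$ to the invariant subspace $\mathcal{H}_{1}\oplus\mathcal{H}_{2}$; it is a unital $\ast$-homomorphism onto $A|_{\mathcal{H}_{1}\oplus\mathcal{H}_{2}}$ by definition, so the whole content is its injectivity. Since every element of $A$ is block diagonal with respect to $\mathcal{H}_{0}\oplus(\mathcal{H}_{1}\oplus\mathcal{H}_{2})$, any $T\in\ker\rho_{12}$ is supported on $\mathcal{H}_{0}$, whence $T=0$ as soon as its restriction $\pi_{0}(T):=T|_{\mathcal{H}_{0}}$ vanishes. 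Thus the plan reduces to proving that the representation $\pi_{0}$ of $A$ on $\mathcal{H}_{0}$ factors through $\rho_{12}$, i.e.\ $\ker\rho_{12}\subseteq\ker\pi_{0}$.

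To exhibit this factorization I would build a $\ast$-homomorphism $\psi\colon A|_{\mathcal{H}_{1}\oplus\mathcal{H}_{2}}\to\pi_{0}(A)$ with $\psi\circ\rho_{12}=\pi_{0}$. By Proposition~\ref{iso} the target $A|_{\mathcal{H}_{1}\oplus\mathcal{H}_{2}}$ of $\rho_{12}$ carries the symbol map $\sigma$ onto $C(\mathbb{T})$, and by the Wold--von Neumann decomposition the operator $U:=\tilde{x}_{1}^{\ast}\tilde{x}_{2}|_{\mathcal{H}_{0}}$ is unitary, so continuous functional calculus furnishes a unital $\ast$-homomorphism $r\colon C(\mathbb{T})\to C^{\ast}(U)=\pi_{0}(A)$ sending the coordinate function $z$ to $U$. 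Setting $\psi:=r\circ\sigma$, I would then check equality of the two $\ast$-homomorphisms $\psi\circ\rho_{12}$ and $\pi_{0}$ on the generators of $A$: the element $\tilde{x}_{1}^{\ast}\tilde{x}_{2}$ is sent by $\rho_{12}$ to $\mathcal{S}\oplus\mathcal{S}$, by $\sigma$ to $z$, and by $r$ to $U=\pi_{0}(\tilde{x}_{1}^{\ast}\tilde{x}_{2})$; while $x_{1}^{\ast}x_{1}$ is sent by $\sigma$ to the scalar $c$ (Proposition~\ref{iso}) and hence by $\psi\circ\rho_{12}$ to $c=\pi_{0}(x_{1}^{\ast}x_{1})$, the latter equality being exactly Corollary~\ref{H0spectrum}. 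As both maps are unital $\ast$-homomorphisms agreeing on generators, $\psi\circ\rho_{12}=\pi_{0}$.

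The conclusion is then immediate: $\ker\rho_{12}\subseteq\ker(\psi\circ\rho_{12})=\ker\pi_{0}$, so any $T\in\ker\rho_{12}$ vanishes on both $\mathcal{H}_{0}$ and $\mathcal{H}_{1}\oplus\mathcal{H}_{2}$ and is therefore $0$; thus $\rho_{12}$ is injective, hence an isomorphism, and composing with the identification of Proposition~\ref{iso} gives $A\cong\mathcal{T}\oplus_{C(\mathbb{T})}\mathcal{T}$ with $\tilde{x}_{1}^{\ast}\tilde{x}_{2}$ corresponding to $\mathcal{S}\oplus\mathcal{S}$. I expect the only genuine obstacle to be the construction of $\psi$, that is, showing that the $\mathcal{H}_{0}$-representation sees nothing beyond the circle symbol. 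This is precisely where Corollary~\ref{H0spectrum} is indispensable: because $x_{1}^{\ast}x_{1}|_{\mathcal{H}_{0}}$ and $x_{2}^{\ast}x_{2}|_{\mathcal{H}_{0}}$ are scalars, the block on $\mathcal{H}_{0}$ contributes only the unitary $U$, whose spectrum automatically lies in $\mathbb{T}$, so that $r$ is well defined and no generator beyond $z\mapsto U$ is required.
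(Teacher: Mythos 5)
Your proposal is correct and follows essentially the same route as the paper: the paper likewise uses the Wold--von Neumann decomposition to get the unitary $\tilde{x}_{1}^{\ast}\tilde{x}_{2}|_{\mathcal{H}_{0}}$, invokes the universal property of $C(\mathbb{T})$ to build a homomorphism $h$ (your $r$), and composes it with the symbol map $\sigma$ of Proposition~\ref{iso} to show that the $\mathcal{H}_{0}$-block of every element is determined by its $\mathcal{H}_{1}\oplus\mathcal{H}_{2}$-block, checking agreement on the generators via Corollary~\ref{H0spectrum}. Your write-up is in fact slightly more careful than the paper's in spelling out the injectivity step (block-diagonality plus $\ker\rho_{12}\subseteq\ker\pi_{0}$) and in using the correct scalar values $x_{1}^{\ast}x_{1}|_{\mathcal{H}_{0}}=c$, $x_{2}^{\ast}x_{2}|_{\mathcal{H}_{0}}=1$, which the paper's proof inadvertently swaps.
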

\begin{proof} Clearly we only need to consider the case with $\mathcal{H}_{0}\neq0$.

Since $\tilde{x}_{1}^{\ast}\tilde{x}_{2}|_{\mathcal{H}_{0}}$ is unitary, as
shown in the above discussion of Wold--von Neumann decomposition, and $C(
\mathbb{T}) $ is the universal $C^*$-algebra generated by a single unitary
generator, there is a unique $C^*$-algebra homomorphism
\[
h\colon \ C(\mathbb{T}) \rightarrow C^{\ast}( \{ \tilde
{x}_{1}^{\ast}\tilde{x}_{2}|_{\mathcal{H}_{0}} \} )
\]
sending $\mathrm{id}_{\mathbb{T}}$ to $\tilde{x}_{1}^{\ast}\tilde{x}%
_{2}|_{\mathcal{H}_{0}}$ while fixing all scalars in $\mathbb{C}\subset
C(\mathbb{T}) $.

Clearly with $x_{1}^{\ast}x_{1}|_{\mathcal{H}_{0}}=1$ and $x_{2}^{\ast}%
x_{2}|_{\mathcal{H}_{0}}=c$,
\[
h\circ\sigma\colon \ C^{\ast}( \{ \tilde{x}_{1}^{\ast}\tilde{x}_{2}%
,x_{1}^{\ast}x_{1} \} ) |_{\mathcal{H}_{1}\oplus\mathcal{H}_{2}%
}\rightarrow C^{\ast}( \{ \tilde{x}_{1}^{\ast}\tilde{x}%
_{2}|_{\mathcal{H}_{0}} \} ) =C^{\ast}( \{ \tilde
{x}_{1}^{\ast}\tilde{x}_{2},x_{1}^{\ast}x_{1} \} ) |_{\mathcal{H}_{0}}
\]
is a well-defined $C^*$-algebra homomorphism sending $\tilde{x}_{1}^{\ast}%
\tilde{x}_{2}|_{\mathcal{H}_{1}\oplus\mathcal{H}_{2}}$ to $\tilde{x}_{1}%
^{\ast}\tilde{x}_{2}|_{\mathcal{H}_{0}}$ and $x_{i}^{\ast}x_{i}|_{\mathcal{H}%
_{1}\oplus\mathcal{H}_{2}}$ to $x_{i}^{\ast}x_{i}|_{\mathcal{H}_{0}}$ for
$i\in \{ 1,2 \} $. Hence the restriction map
\[
T\in C^{\ast}( \{ \tilde{x}_{1}^{\ast}\tilde{x}_{2},x_{1}^{\ast
}x_{1} \} ) \mapsto T|_{\mathcal{H}_{1}\oplus\mathcal{H}_{2}}\in
C^{\ast}( \{ \tilde{x}_{1}^{\ast}\tilde{x}_{2},x_{1}^{\ast}%
x_{1} \} ) |_{\mathcal{H}_{1}\oplus\mathcal{H}_{2}}
\]
gives a well-defined isomorphism.
\end{proof}

In Theorem \ref{restriction}, we treat elements of $C^{\ast} ( \{ \tilde{x}%
_{1}^{\ast}\tilde{x}_{2},x_{1}^{\ast}x_{1} \} ) $ as operators
instead of families of operators by fixing implicitly the value of
$\mathbb{T}$-parameter at any $t_{1}\in\mathbb{T}$, i.e., the statement of
Theorem \ref{restriction} is a pointwise result at any $t_{1}\in\mathbb{T}$. It is clear that
collectively the restriction map
\[
C\big(\mathbb{C}P_{q,c}^{1}\big) \rightarrow C\big( \mathbb{C}
P_{q,c}^{1}\big) \big|_{\widetilde{\mathcal{H}_{1}}\oplus\widetilde{\mathcal{H}_{2}}}
\]
is still a $C^*$-algebra isomorphism where elements of $C\big( \mathbb{C}P_{q,c}^{1}\big) $ are $\mathbb{T}$-families of operators on $\ell^{2} ( \mathbb{Z}_{\geq} ) $ and $\widetilde{\mathcal{H}_{1}}\oplus\widetilde{\mathcal{H}_{2}}$ represents a $\mathbb{T}$-family of Hilbert subspaces $\mathcal{H}_{1}\oplus\mathcal{H}_{2}$ of $\ell^{2} (\mathbb{Z}_{\geq} ) $ constructed pointwise for each $t_{1}\in \mathbb{T}$ as described above.

\section{Superfluous circle parameter}

In this section, we show that the $\mathbb{T}$-parameter is superfluous for
the $C^*$-algebra $C\big( \mathbb{C}P_{q,c}^{1}\big)
\big|_{\widetilde{\mathcal{H}_{1}}\oplus\widetilde{\mathcal{H}_{2}}}$ consisting
of $\mathbb{T}$-families of operators on $\mathcal{H}_{1}\oplus\mathcal{H}%
_{2}$, and hence $C\big(\mathbb{C}P_{q,c}^{1}\big)
|_{\widetilde{\mathcal{H}_{1}}\oplus\widetilde{\mathcal{H}_{2}}}\cong C\big(
\mathbb{C}P_{q,c}^{1}\big) $ is isomorphic to $C^{\ast} ( \{
\tilde{x}_{1}^{\ast}\tilde{x}_{2},x_{1}^{\ast}x_{1} \} )
|_{\mathcal{H}_{1}\oplus\mathcal{H}_{2}}\cong\mathcal{T}\oplus_{C (
\mathbb{T} ) }\mathcal{T}$ (for any $t_{1}\in\mathbb{T}$ fixed) as
obtained in Proposition~\ref{iso}.

Recall that by a simple change of orthonormal basis $e_{k}\rightsquigarrow
t^{k}e_{k}$ of $\ell^{2} ( \mathbb{Z}_{\geq} ) $ for any fixed
$t\in\mathbb{T}$, the weighted shift operator $\alpha$ becomes $\tilde{\alpha
}=t\alpha$ with respect to the new orthonormal basis, while the self-adjoint
operator $\gamma$ remains the same operator $\tilde{\gamma}=\gamma$.

Note that the earlier concrete description of $x_{1}^{\ast}x_{2}$,
$x_{1}^{\ast}x_{1}$, and $x_{2}^{\ast}x_{2}$ as families of operators
parametrized by $t_{1}\in\mathbb{T}$ (with $t_{2}=\overline{t_{1}}$) viewed as
a representation of $C\big(\mathbb{C}P_{q,c}^{1}\big) \equiv C^{\ast
} ( \{ x_{1}^{\ast}x_{2},x_{1}^{\ast}x_{1},\allowbreak x_{2}^{\ast}
x_{2} \} ) $ can be first ``consolidated'' by a change of orthonormal basis converting
$\alpha$ to $\tilde{\alpha}:=t_{1}^{2}\alpha$ and $\gamma$ to $\tilde{\gamma
}=\gamma$, so that we can rewrite the description as
\begin{gather*}
x_{1}^{\ast}x_{1} =c+(1-c) \gamma^{2}+\sqrt{c}%
\overline{t_{1}}^{2}\alpha^{\ast}\gamma+\sqrt{c}t_{1}^{2}\gamma\alpha\\
\hphantom{x_{1}^{\ast}x_{1}}{} =c+(1-c) \tilde{\gamma}^{2}+\sqrt{c}\tilde{\alpha}^{\ast
}\tilde{\gamma}+\sqrt{c}\tilde{\gamma}\tilde{\alpha},
\\
x_{2}^{\ast}x_{2} =1+q^{-2}(c-1) \gamma^{2}-q^{-2}\sqrt
{c}t_{1}^{2}\gamma\alpha-q^{-2}\sqrt{c}\overline{t_{1}}^{2}\alpha^{\ast}%
\gamma\\
\hphantom{x_{2}^{\ast}x_{2}}{} =1+q^{-2}(c-1) \tilde{\gamma}^{2}-q^{-2}\sqrt{c}\tilde
{\gamma}\tilde{\alpha}-q^{-2}\sqrt{c}\tilde{\alpha}^{\ast}\tilde{\gamma},
\\
x_{1}^{\ast}x_{2} =\sqrt{c}\overline{t_{1}}^{2} ( \alpha^{\ast
} ) ^{2}-cq^{-1}\alpha^{\ast}\gamma+\gamma\alpha^{\ast}-q^{-1}\sqrt
{c}t_{1}^{2}\gamma^{2}\\
\hphantom{x_{1}^{\ast}x_{2}}{}
 =t_{1}^{2}\big( \sqrt{c}\overline{t_{1}}^{4} ( \alpha^{\ast} )
^{2}-cq^{-1}\overline{t_{1}}^{2}\alpha^{\ast}\gamma+\overline{t_{1}}^{2}%
\gamma\alpha^{\ast}-q^{-1}\sqrt{c}\gamma^{2}\big) \\
\hphantom{x_{1}^{\ast}x_{2}}{} =t_{1}^{2}\big( \sqrt{c} ( \tilde{\alpha}^{\ast} )
^{2}-cq^{-1}\tilde{\alpha}^{\ast}\tilde{\gamma}+\tilde{\gamma}\tilde{\alpha
}^{\ast}-q^{-1}\sqrt{c}\tilde{\gamma}^{2}\big) ,
\end{gather*}
where it is understood that $\tilde{\alpha},\tilde{\gamma}$ with respect to
suitable orthonormal basis of $\ell^{2} ( \mathbb{Z}_{\geq} ) $ are
the same familiar matrix operators $\alpha,\gamma$, and hence we can simply
replace $\tilde{\alpha},\tilde{\gamma}$ by $\alpha,\gamma$ in the above
formulas for $x_{1}^{\ast}x_{2}$, $x_{1}^{\ast}x_{1}$, and $x_{2}^{\ast}x_{2}$.

So we have
\begin{gather*}
x_{1}^{\ast}x_{1} =c+(1-c) \gamma^{2}+\sqrt{c}\alpha^{\ast
}\gamma+\sqrt{c}\gamma\alpha,\\
x_{2}^{\ast}x_{2} =1+q^{-2}(c-1) \gamma^{2}-q^{-2}\sqrt
{c}\gamma\alpha-q^{-2}\sqrt{c}\alpha^{\ast}\gamma,\\
x_{1}^{\ast}x_{2} =t_{1}^{2}\big( \sqrt{c} ( \alpha^{\ast} )
^{2}-cq^{-1}\alpha^{\ast}\gamma+\gamma\alpha^{\ast}-q^{-1}\sqrt{c}\gamma
^{2}\big) ,
\end{gather*}
where only $x_{1}^{\ast}x_{2}$ still involves $t_{1}=\overline{t_{2}}$ as a
factor. From Proposition~\ref{recursive} and Corollary~\ref{weight_shift}, there is an orthonormal
basis $ \{ v_{2k},v_{2k-1}\colon k\in\mathbb{N} \} $ of $\mathcal{H}%
_{1}\oplus\mathcal{H}_{2}$ consisting of eigenvectors of $x_{1}^{\ast}x_{1}$
and $x_{2}^{\ast}x_{2}$ and with respect to which $x_{1}^{\ast}x_{2}$ is a
double weighted shift. So after the change of orthonormal basis $v_{2k}%
\rightsquigarrow\big( t_{1}^{2}\big) ^{k}v_{2k}$ and $v_{2k-1}%
\rightsquigarrow\big( t_{1}^{2}\big) ^{k}v_{2k-1}$, the factor $t_{1}%
^{2}$ in the formula of $x_{1}^{\ast}x_{2}$ can be dropped while the formulas
of $x_{1}^{\ast}x_{1}$ and $x_{2}^{\ast}x_{2}$ remain the same, i.e., we have
$C^{\ast} ( \{ x_{1}^{\ast}x_{2},x_{1}^{\ast}x_{1} \}
 ) |_{\mathcal{H}_{1}\oplus\mathcal{H}_{2}}$ for any fixed $t_{1}%
\in\mathbb{T}$ unitarily equivalent to $C^{\ast} ( \{ x_{1}^{\ast
}x_{2},x_{1}^{\ast}x_{1} \} ) |_{\mathcal{H}_{1}\oplus
\mathcal{H}_{2}}$ for $t_{1}:=1$. So we conclude that the parameter $t_{1}%
\in\mathbb{T}$ is ``edundant'' in the sense
that representations of the generators $x_{1}^{\ast}x_{2}$, $x_{1}^{\ast}%
x_{1}$, and $x_{2}^{\ast}x_{2}$ of $C\big( \mathbb{C}P_{q,c}^{1}\big) $
as operators on $\ell^{2} ( \mathbb{Z}_{\geq} ) $ by the above
formulas for different $t_{1}$'s in $\mathbb{T}$ are unitarily equivalent representations.

So we can now say that $C\big( \mathbb{C}P_{q,c}^{1}\big) \cong C^{\ast
}( \{ x_{1}^{\ast}x_{2},x_{1}^{\ast}x_{1}\} ) $
where $C^{\ast}( \{ x_{1}^{\ast}x_{2},x_{1}^{\ast}x_{1}\}
) $ is considered as in the previous section for the operators
$x_{1}^{\ast}x_{2},x_{1}^{\ast}x_{1}$ without specifying any value of the
$t_{1}$-parameter. Thus we conclude that $C\big( \mathbb{C}P_{q,c}^{1}\big) $ is isomorphic to the pullback $\mathcal{T}\oplus_{C(\mathbb{T}) }\mathcal{T}$ of two copies of $\mathcal{T}\overset{\sigma
}{\rightarrow}C(\mathbb{T}) $ by Proposition~\ref{iso}, and hence is
isomorphic to the algebra $C\big( \mathbb{S}_{\mu c}^{2}\big) $ of
Podle\'{s} quantum 2-sphere by the result of~\cite{Sh:qp}.

We now summarize our conclusion in the following theorem, where the operators
$X_{1}:=\sqrt{c}\alpha+\gamma$ and $X_{2}:=-q^{-1}\sqrt{c}\gamma+\alpha^{\ast
}$ on $\ell^{2} ( \mathbb{Z}_{\geq} ) $ are respectively the values
of the $\mathbb{T}$-families $x_{1}$ and $x_{2}$ at $t_{1}=1=t_{2}$.

\begin{theorem}\label{final} The $C^*$-algebra $C\big( \mathbb{C}P_{q,c}^{1}\big)
\cong C^{\ast} ( \{ X_{1}^{\ast}X_{2},X_{1}^{\ast}X_{1} \}
 ) $ for the linear operators $X_{1}:=\sqrt{c}\alpha+\gamma$ and
$X_{2}:=-q^{-1}\sqrt{c}\gamma+\alpha^{\ast}$ on $\ell^{2} (
\mathbb{Z}_{\geq} ) $, and is isomorphic to the pullback
\[
\mathcal{T}\oplus_{C ( \mathbb{T} ) }\mathcal{T}\equiv\big\{
 ( T,S ) \in\mathcal{T}\oplus\mathcal{T}\colon \sigma(T)
=\sigma ( S ) \big\}
\]
of two copies of the standard Toeplitz $C^*$-algebra $\mathcal{T}$ along the
symbol map $\mathcal{T}\overset{\sigma}{\rightarrow}C( \mathbb{T})$.
\end{theorem}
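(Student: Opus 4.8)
The plan is to assemble the isomorphisms established in the preceding sections, taking care to track how the generators are identified. First I would invoke the analysis of the ``Superfluous circle parameter'' section, where it is shown that for different values of $t_{1}\in\mathbb{T}$ the representations of the generators $x_{1}^{\ast}x_{2}$, $x_{1}^{\ast}x_{1}$, $x_{2}^{\ast}x_{2}$ as operators on $\ell^{2}(\mathbb{Z}_{\geq})$ are all unitarily equivalent, so that the $\mathbb{T}$-parameter is redundant and $C\big(\mathbb{C}P_{q,c}^{1}\big)\cong C^{\ast}(\{x_{1}^{\ast}x_{2},x_{1}^{\ast}x_{1}\})$ with the operators taken at any single fixed $t_{1}$. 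Specializing to $t_{1}=1=t_{2}$, the $\mathbb{T}$-families $x_{1}$ and $x_{2}$ become the concrete operators $X_{1}=\sqrt{c}\alpha+\gamma$ and $X_{2}=-q^{-1}\sqrt{c}\gamma+\alpha^{\ast}$, which gives the first asserted identification $C\big(\mathbb{C}P_{q,c}^{1}\big)\cong C^{\ast}(\{X_{1}^{\ast}X_{2},X_{1}^{\ast}X_{1}\})$.

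Next I would bridge the distinguished generator $X_{1}^{\ast}X_{2}$ and its polar companion $\tilde{X}_{1}^{\ast}\tilde{X}_{2}$, where $\tilde{X}_{i}:=X_{i}\underline{(X_{i}^{\ast}X_{i})^{-1/2}}$ are the specializations of the partial isometries $\tilde{x}_{i}$ at $t_{1}=1$. Applying Theorem~\ref{generators} at this value of the parameter---whose proof exhibits the factorization $x_{1}^{\ast}x_{2}=(x_{1}^{\ast}x_{1})^{1/2}\tilde{x}_{1}^{\ast}\tilde{x}_{2}(x_{2}^{\ast}x_{2})^{1/2}$ together with the reverse expression of $\tilde{x}_{1}^{\ast}\tilde{x}_{2}$ through the original generators---yields $C^{\ast}(\{X_{1}^{\ast}X_{2},X_{1}^{\ast}X_{1}\})=C^{\ast}(\{\tilde{X}_{1}^{\ast}\tilde{X}_{2},X_{1}^{\ast}X_{1}\})$.

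Finally I would apply Theorem~\ref{restriction} at $t_{1}=1$ to pass from the algebra on all of $\ell^{2}(\mathbb{Z}_{\geq})$ to its restriction to the invariant subspace $\mathcal{H}_{1}\oplus\mathcal{H}_{2}$, which that theorem guarantees is a $C^{\ast}$-algebra isomorphism, and then invoke Proposition~\ref{iso} to identify the restricted algebra with the pullback $\mathcal{T}\oplus_{C(\mathbb{T})}\mathcal{T}$ under $\tilde{X}_{1}^{\ast}\tilde{X}_{2}\mapsto\mathcal{S}\oplus\mathcal{S}$. Composing these isomorphisms produces $C\big(\mathbb{C}P_{q,c}^{1}\big)\cong\mathcal{T}\oplus_{C(\mathbb{T})}\mathcal{T}$. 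Since the substantive operator-theoretic work has already been discharged in those earlier results, I do not anticipate a genuine obstacle at this stage; the only care required is the bookkeeping of the two identifications---the legitimacy of fixing $t_{1}=1$ (from the unitary equivalence of the $\mathbb{T}$-family) and the interchangeability of $X_{1}^{\ast}X_{2}$ with $\tilde{X}_{1}^{\ast}\tilde{X}_{2}$ as generators (from Theorem~\ref{generators})---and the chaining of the established isomorphisms in the correct order.
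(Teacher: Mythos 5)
Your overall architecture matches the paper's: remove the circle parameter via the basis changes of the ``Superfluous circle parameter'' section, trade $X_{1}^{\ast}X_{2}$ for $\tilde{X}_{1}^{\ast}\tilde{X}_{2}$ via Theorem~\ref{generators}, and then chain Theorem~\ref{restriction} with Proposition~\ref{iso}. But there is one step that, as you state it, is not justified: you claim that for different $t_{1}$ the representations of the generators are unitarily equivalent \emph{as operators on all of} $\ell^{2}(\mathbb{Z}_{\geq})$, and you use this first, before restricting to $\mathcal{H}_{1}\oplus\mathcal{H}_{2}$. The paper proves the unitary equivalence only \emph{after} restriction. The reason the order matters is that the second basis change, $v_{2k}\rightsquigarrow\big(t_{1}^{2}\big)^{k}v_{2k}$ and $v_{2k-1}\rightsquigarrow\big(t_{1}^{2}\big)^{k}v_{2k-1}$, which absorbs the residual factor $t_{1}^{2}$ in $x_{1}^{\ast}x_{2}$, is defined using the eigenbasis of Proposition~\ref{recursive} and Corollary~\ref{weight_shift}, and that basis spans only $\mathcal{H}_{1}\oplus\mathcal{H}_{2}$. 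On the complementary summand $\mathcal{H}_{0}$ (which the paper explicitly does not rule out being nonzero) the operator $\tilde{x}_{1}^{\ast}\tilde{x}_{2}|_{\mathcal{H}_{0}}$ is a unitary $U$, and there is in general no unitary conjugating $U$ to $t_{1}^{2}U$; so a global spatial equivalence on $\ell^{2}(\mathbb{Z}_{\geq})$ is not available by this method.

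The repair is exactly the paper's ordering: first use Theorem~\ref{restriction} (collectively over $\mathbb{T}$, as noted at the end of Section~3) to replace $C\big(\mathbb{C}P_{q,c}^{1}\big)$ by its isomorphic restriction to $\widetilde{\mathcal{H}_{1}}\oplus\widetilde{\mathcal{H}_{2}}$, \emph{then} apply the two basis changes there to see that all point evaluations are unitarily equivalent to the one at $t_{1}=1$, and finally invoke Proposition~\ref{iso}. With that reordering your argument coincides with the paper's.
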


We remark that the above change of orthonormal basis $v_{2k}\rightsquigarrow
\big( t_{1}^{2}\big) ^{k}v_{2k}$ and $v_{2k-1}\rightsquigarrow\big(
t_{1}^{2}\big) ^{k}v_{2k-1}$ is ``compatible'' and hence works well with the elements
$x_{1}^{\ast}x_{2}$, $x_{1}^{\ast}x_{1}$, and $x_{2}^{\ast}x_{2}$ of $C\big(
\mathbb{C}P_{q,c}^{1}\big) $, but is not suitable for manipulating more
fundamental elements like $x_{1}$ and $x_{2}$ in $C\big( \mathbb{S}_{q}^{3}\big) \equiv C( {\rm SU}_{q}(2)) $.

\subsection*{Acknowledgements}

N.~Ciccoli was partially supported by INDAM-GNSAGA and Fondo Ricerca di
Base 2017 ``Geometria della quantizzazione''.
A.J.-L.~Sheu was partially supported by University of Perugia~-- Visiting Researcher
Program, the grant H2020-MSCA-RISE-2015-691246-QUANTUM DYNAMICS, and the
Polish government grant 3542/H2020/2016/2.

\pdfbookmark[1]{References}{ref}
\LastPageEnding

\end{document}